\documentclass{amsart}
\usepackage[T1]{fontenc}
\usepackage[utf8x]{inputenc}

\usepackage{tikz}
\usepackage{graphicx}
\usepackage{booktabs}
\usepackage{amsmath, amssymb}
\usepackage{float}
\usepackage{subcaption}
\usepackage{caption}
\usepackage{xcolor}
\usepackage{listings}
\usepackage{booktabs}
\usepackage{placeins}   
\usepackage{comment}
\usepackage{adjustbox}

\usepackage{tcolorbox}
\tcbuselibrary{skins,breakable}

\usepackage{lmodern}

\usepackage{listings}
\newcommand{\algtitle}[1]{#1}

\usepackage{amscd,amsmath,amssymb,amsthm,amsfonts,epsfig,graphics}
\usepackage{graphicx}
\usepackage{epstopdf}
\usepackage[round,authoryear]{natbib}

\usepackage{xcolor}

\usepackage[colorlinks=true,citecolor=blue]{hyperref}
\usepackage{mathabx} 
\usepackage{hyperref}
\usepackage{subcaption}
\usepackage{comment}
\let\oldtocsection=\tocsection
\let\oldtocsubsection=\tocsubsection
\renewcommand{\tocsection}[2]{\hspace{0em}\oldtocsection{#1}{#2}}
\renewcommand{\tocsubsection}[2]{\hspace{1.8em}\oldtocsubsection{#1}{#2}}

\usepackage{mdframed} 
\usepackage{lipsum}
\usepackage{tcolorbox} 
\usepackage{comment}  

\tcbuselibrary{theorems}

\usepackage[margin=3cm]{geometry}
\usepackage{algorithm}
\usepackage{algpseudocode}
\newtcolorbox{blueheadgreenbox}[1]{%
  enhanced, breakable,
  colback=green!7,            
  colframe=green!50!black,    
  boxrule=0.8pt, arc=1mm,
  left=1.2ex, right=1.2ex, top=1.2ex, bottom=1.2ex,
  colbacktitle=blue!70!black, 
  coltitle=white,             
  fonttitle=\bfseries,
  boxed title style={sharp corners, boxrule=0pt},
  attach boxed title to top left={xshift=2mm,yshift*=-2mm},
  title={\algtitle{#1}}}
\newtheorem{theorem}{Theorem}[section]
\vspace{10pt}
\newtheorem{proposition}[theorem]{Proposition}
\newtheorem{lemma}[theorem]{Lemma}

\theoremstyle{definition}
\newtheorem{definition}[theorem]{Definition}

\vspace{10pt}
\newtheorem{assumption}[theorem]{Assumption}

\theoremstyle{remark}
\newtheorem{remark}[theorem]{Remark}
\newtheorem{corollary}[theorem]{Corollary}

\newtcbtheorem{mytheo}{Theorem}%
{colback=green!5,colframe=black!35!black,fonttitle=\bfseries}{th}

\newcommand{\mubar}{\bar{\mu}}

\begin{document}
\pagenumbering{arabic}

\title{Sharp Support Thresholds for Smeariness of Absolutely Continuous Measures on Spheres}
\vspace{-8mm}
\maketitle
\begin{center}
  \textbf{Susovan Pal} \\
  Department of Mathematics and Data Science, Vrije Universiteit Brussel (VUB) \\
  Pleinlaan 2, B-1050 Elsene/Ixelles, Belgium \\
  \texttt{susovan.pal@vub.be, susovan97@gmail.com}

\end{center}

\noindent\textbf{Keywords:} Central Limit Theorems on Riemannian manifolds, smeariness, geometric statistics

\noindent\textbf{2020 Mathematics Subject Classification:}
Primary: 62H11; Secondary: 60F05, 60D05, 53C22.

\tableofcontents

\begin{abstract}
We investigate support thresholds for fully smeary and directionally smeary absolutely continuous probability measures on the sphere \(\mathbb{S}^m\). The motivation is inferential: smeariness is caused by degeneracy of the Hessian of the Fr\'echet function, and such degeneracy can invalidate the classical central limit theorem (CLT) for Fr\'echet means and the corresponding Wald-type \(\chi^2\) inference.

For rotationally symmetric densities, we show that full and directional smeariness are equivalent. The Hessian and fourth-order terms are governed by two explicit geometry-dependent radii \(R_m<S_m\). In dimensions \(m=2,3\), rotationally symmetric smeariness cannot occur. For \(m\ge4\), support contained in the geodesic ball of radius \(S_m\), centered at the Fr\'echet mean, rules out smeariness; conversely, for every \(\varepsilon>0\) with \(S_m+\varepsilon<\pi\), we construct examples of rotationally symmetric \(2\)-smeary densities supported in the ball of radius \(S_m+\varepsilon\).

For general densities, closed hemispherical support rules out both full and directional smeariness. Support contained in the closed ball of radius \(S_m\) rules out full smeariness, while we construct explicit, directionally \(2\)-smeary examples supported in balls of radius \(\pi/2+\varepsilon\).

As a byproduct, the explicit Hessian formulas in this paper also provide a practical diagnostic for detecting proximity to the Hessian-degenerate, non-classical regime.
\end{abstract}

\section{Introduction}\label{scn:introduction}

The Fr\'echet mean is the standard nonlinear analogue of the Euclidean expectation.  If \(X\) is a random variable on a Riemannian manifold \((M,g)\), with Riemannian distance \(d_g\), its \textit{population} Fr\'echet function is \(F_\mu(p):=\mathbb E[d_g^2(p,X)]\), \(p\in M\), and any minimizer of \(F_\mu\) is called a \textit{population} Fr\'echet mean.  Given i.i.d.\ random variables \(X_1,\ldots,X_n\sim X\), the\textit{ empirical} or \textit{sample} Fr\'echet function is \(F_n(p):=n^{-1}\sum_{j=1}^n d_g^2(p,X_j)\), and any minimizer of \(F_n\) is called a \textit{sample} Fr\'echet mean.  A measurable selection of sample Fr\'echet means will be denoted by \(\hat\mu_n\). Often, the word \textit{population} is dropped if the context is clear. Both \textit{local} and \textit{global} minimizers are used, and they are specified as local/global population/sample Fr\'echet means. Throughout this paper, the support-threshold results are local statements at the specified Fr\'echet mean: the hypotheses state when this point is assumed to be a unique local Fr\'echet mean, and global minimality is not asserted unless explicitly stated.

A central problem in statistics on manifolds is to understand when \(\hat\mu_n\) satisfies the classical \(n^{-1/2}\)-central limit theorem and when the usual Hessian-based normal approximation breaks down; see, for instance, \citet{BP,BP2,BL,HH,H_Procrustes_10}.  If the Hessian of $F_\mu$ at $\mubar$ is nondegenerate, the classical Bhattacharya--Patrangenaru central limit theorem (BPCLT for short) gives a Gaussian limit in the tangent space \(T_{\mubar}M\), with covariance involving the inverse Hessian of \(F_\mu\) at \(\mubar\).  This is the regime in which Hessian-corrected Wald-type procedures are meaningful.  If, however, the Hessian degenerates, the classical approximation can fail and slower, non-Gaussian behavior may occur; this phenomenon is known as\textit{ smeariness} (cf. Definition \ref{def:smeariness-directional}), following \citet{HotzHuckemann2015,HE}.  Thus, before applying normal or chi-square approximations for Fr\'echet means, it is natural to ask for geometric conditions which rule out Hessian degeneracy.

This paper answers that question on the unit sphere \(\mathbb S^m\) from the viewpoint of \textit{support}. More precisely, we ask: how far must the support of an absolutely continuous probability measure on \(\mathbb S^m\) extend from a prescribed unique local Fr\'echet mean before full or directional smeariness can occur? The answer is given by two support-threshold theorems. Under \textit{rotational symmetry}, there is a sharp threshold picture governed by two explicit geometry-dependent radii \(R_m<S_m\). For \textit{general} densities, closed hemispherical support is the sharp obstruction to directional smeariness, while the larger radius \(S_m\) is the corresponding obstruction for full smeariness. These results also give a population-level diagnostic for inference: if the support is known, or empirically appears, to lie safely below the relevant threshold, then one has evidence for the non-smeary Hessian regime. In practice, the sample radii \(d_g(\hat\mu_n,X_i)\) around the empirical Fr\'echet mean \(\hat\mu_n\) provide a natural support diagnostic, although they do not replace a population support assumption. A complementary Hessian diagnostic is also available from the explicit formulas derived in Section~\ref{scn:common-setup-derivatives}: in the rotationally symmetric case one may use the scalar formula \eqref{eqn:Hess}, while in the general-density case one may use the matrix formula \eqref{eqn:Hessian-general-density}, equivalently the bilinear form \eqref{eqn:Hessian-general-bilinear-form}. Thus, for spherical data, empirical radii and directions can be plugged into these formulas to assess proximity to the Hessian-degenerate regime in which classical Fr\'echet mean CLTs and Hessian-corrected Wald-type procedures may fail. This is only a diagnostic interpretation, but it explains why sharp support thresholds and explicit Hessian formulas are useful for inference.

Spherical data are a natural testing ground for this question.  Directional and spherical observations occur throughout directional statistics, with applications in earth sciences, astronomy, biology, medicine, neuroscience, acoustics, image analysis, machine learning and related fields; see, for example, \citet{FisherLewisEmbleton1993SphericalData,MardiaJupp2000DirectionalStatistics,PewseyGarciaPortugues2021Directional}.  At the same time, \(\mathbb S^m\) is geometrically explicit enough that the support threshold can be computed rather than merely bounded.

\subsection{Smeariness and the support question}
\begin{definition}[\textbf{Lifted Fr\'echet function}]
\label{def:lifted-Frechet-function}
Let \(\mubar\) be a local Fr\'echet mean of \(X\), and let \(\exp_{\mubar}\) be the Riemannian exponential map at \(\mubar\). On a neighborhood of \(0\in T_{\mubar}M\) where normal coordinates are used, define
\[
    \widetilde F_\mu(u):=F_\mu(\exp_{\mubar}u).
\]
We call \(\widetilde F_\mu\) the \emph{lifted Fr\'echet function} at \(\mubar\).
\end{definition}

The terminology in the next definition is \textit{local} and \textit{Hessian-based}. It is designed to isolate the Hessian-degenerate regime relevant for \(2\)-smeariness and for the support obstructions proved in this paper. In the more general asymptotic terminology of \citet{HotzHuckemann2015,HE,Eltzner2022GeometricalSmeariness}, smeariness is usually formulated through the leading order of the lifted Fr\'echet function, or equivalently through the resulting nonclassical fluctuation rate of sample Fr\'echet means under appropriate CLT assumptions. Under the smooth cut-locus-avoidance assumptions used below, the \(2\)-smeary cases relevant here are Hessian-degenerate local minima whose first nonzero growth along the degenerate directions is quartic.

\begin{definition}[{\textbf{Full and directional smeariness}}]\label{def:smeariness-directional}
Assume that \(\mubar\) is an \textit{isolated} local minimum of \(F_\mu\), that \(\widetilde F_\mu\) is smooth near \(0\), and set the Hessian \(H:=D^2\widetilde F_\mu(0)\).  We say that \(\mu\) is \emph{smeary at \(\mubar\)} if \(H\) is singular.  If \(H=0\), we say that \(\mu\) is \emph{fully smeary}.  If \(H\) is singular but \(1\le \operatorname{rank}(H)<m\), we say that \(\mu\) is \emph{directionally smeary}.  More precisely, let \(V:=\ker H\). If \(V\neq\{0\}\) and
\[
    D^3\widetilde F_\mu(0)[v,v,v]=0
    \quad\text{for all }v\in V,
\]
while
\[
    D^4\widetilde F_\mu(0)[v,v,v,v]>0
    \quad\text{for all }v\in V\setminus\{0\},
\]
then we say that \(\mu\) is \(2\)-smeary along \(V\). If \(H=0\), so that
\(V=T_{\mubar}M\), we say that \(\mu\) is fully \(2\)-smeary; if
\(1\le\operatorname{rank}(H)<m\), we say that \(\mu\) is directionally
\(2\)-smeary along \(\ker H\). 
\end{definition}

In the main results, the density is assumed to vanish in a neighborhood of the cut locus of the proposed Fr\'echet mean; see Assumption~\ref{assum:density-assumption}.  Under this oft-used assumption, \(\widetilde F_\mu\) is smooth near \(0\), so the Hessian and fourth-order derivatives used in the definition and in the support thresholds below are well defined.

In the relevant work \citet{afsari2011riemannianLp}, the author established, under suitable curvature and injectivity-radius bounds, that support in a sufficiently small strongly convex ball ensures positive definiteness of $H$ above.  On $\mathbb{S}^m$, the support established there becomes sub-hemispherical, i.e. contained in an \textit{open} hemispherical ball. Our aim is different and more specific: for absolutely continuous measures on \(\mathbb S^m\), we determine sharper sphere-specific thresholds for Hessian degeneracy, full smeariness and directional smeariness.  In particular, in the rotationally symmetric case the Hessian remains positive definite \textit{beyond} the closed hemisphere, up to the radius \(R_m>\pi/2\), and the fourth-order obstruction is governed by a larger radius \(S_m\).

Let us also indicate how the present paper differs from the closest sphere-specific predecessors. Tran (2021) computed higher derivative tensors of the Fréchet function on spheres and obtained CLT consequences as well as rotationally symmetric smeary examples. \citet{Eltzner2022GeometricalSmeariness} exhibited geometrical smeariness on spheres and high-dimensional super-hemispherical constructions. The present paper has a different emphasis: it is a support-threshold result. It identifies the geometric radii \(R_m\) and \(S_m\), proves support obstructions below them, and proves sharpness by constructing absolutely continuous densities beyond the relevant threshold. It also separates the rotationally symmetric case, where full and directional smeariness coincide, from the general-density case, where directional smeariness can occur just beyond the hemisphere.

\subsection{Main results}

The key simplification under rotational symmetry is that geometry and probability separate.  The Hessian and fourth-order terms of the lifted Fr\'echet function reduce to one-dimensional radial integrals against two geometry-only functions \(b_m\) and \(h_m\), defined in \eqref{eqn:b} and \eqref{eqn:h}.  The zero \(R_m\) of \(b_m\) is the Hessian threshold; see Proposition~\ref{prop:zero_bm}.  For \(m\ge4\), the first zero \(S_m\) of \(h_m\) is the quartic threshold; see Proposition~\ref{prop:sign_h_m_2_3_and_zero_m_ge_4}.  The ordering \(R_m<S_m\) and the sign of \(h_m\) below \(S_m\) are proved in Lemma~\ref{lem:Rm-Sm-ordering}, and the asymptotic relation \(R_m,S_m=\pi/2+O(1/m)\) is recorded in Proposition~\ref{prop:asymptotics-Rm-Sm}.

The first main theorem, Theorem~\ref{thm:semi-tight-support-rot-symm}, gives the rotationally symmetric support characterization.  In dimensions \(m=2,3\), rotationally symmetric smeariness is impossible under the standing assumptions.  For \(m\ge4\), support contained in the closed ball of radius \(S_m\) around the Fr\'echet mean still cannot produce a smeary local minimum.  Conversely, for every \(\varepsilon>0\) with \(S_m+\varepsilon<\pi\), there exists an absolutely continuous rotationally symmetric measure supported in \(B_g(\mubar;S_m+\varepsilon)\), with positive mass beyond \(S_m\), whose Hessian vanishes and whose fourth-order term is positive in every tangent direction.  Thus \(S_m\) is the \textit{sharp} support threshold for full rotationally symmetric \(2\)-smeariness.

The second main theorem, Theorem~\ref{thm:general-density-support-threshold}, treats general absolutely continuous densities.  Without rotational symmetry, full and directional smeariness are no longer equivalent.  Closed hemispherical support forces the Hessian to be positive definite and hence rules out both full and directional smeariness.  Support contained in the closed ball of radius \(R_m\) prevents the Hessian from vanishing identically, and support contained in the closed ball of radius \(S_m\) rules out full smeariness.  These obstructions are sharp in different senses: directional \(2\)-smeariness can occur with support in \(B_g(N;\pi/2+\varepsilon)\) for arbitrary \(\varepsilon>0\), while full \(2\)-smeariness can occur with support in \(B_g(N;S_m+\varepsilon)\), even within the rotationally symmetric subclass.
\subsection{Organization of the paper} The rest of the paper is organized as follows.  Section~\ref{scn:TE-sqdist} derives the fourth-order Taylor expansion of the squared distance on \(\mathbb S^m\).  Section~\ref{scn:common-setup-derivatives} sets up the general-density and rotationally symmetric Fr\'echet functions and derives the Hessian and fourth-order formulas.  Section~\ref{scn:asymptotics-b-h} studies the radial threshold functions \(b_m,h_m\) and their zeros \(R_m,S_m\).  Section~\ref{scn:semi-tightness-rot-symm} proves the rotationally symmetric support theorem, and Section~\ref{scn:general-density-support-threshold} proves the general-density support theorem. Section~\ref{scn:bpclt-consequences} records BPCLT consequences in the support regimes where the Hessian is forced to be positive definite: below \(R_m\) in the rotationally symmetric case and below \(\pi/2\) in the general-density case. Section~\ref{app:asymptotics-Rm-Sm} records the high-dimensional location of the thresholds, and Appendix~\ref{scn:symbolic-computation} contains the symbolic computations supporting the Taylor expansion.

\subsection{{Acknowledgements and funding.}}

This research was supported by funding from Research Foundation--Flanders (FWO) via the Odysseus II programme no.~G0DBZ23N. The author acknowledges technical discussions with Prof. Dr. Stephan Huckemann and Dr. Benjamin Eltzner from the University of G\"ottingen, Germany, and also encouragement by Junior Professor David Tewodrose at Vrije Universiteit Brussel.


\section{Taylor expansion of the Riemannian squared distance on $\mathbb{S}^m$ through order $4$}
\label{scn:TE-sqdist}

This section records the Taylor expansion, in normal coordinates at the north pole, of the squared Riemannian distance on the sphere.  The expansion is the local computational input used later to derive the Hessian and fourth-order terms of the Fr\'echet function.  Throughout, the expansion is taken in the first variable while the second variable is fixed away from the cut locus.

\subsection{Taylor expansion of the Riemannian squared distance function in the tangent space using the exponential map}

Let $\mathbb{S}^{m}\subset\mathbb{R}^{m+1}$ be the unit sphere, let $N\in\mathbb{S}^{m}$ be the north pole, and identify $T_{N}\mathbb{S}^{m}\cong\mathbb{R}^{m}$ with the Euclidean inner product $\langle\cdot,\cdot\rangle$ and norm $\|\cdot\|$.  Fix $v\in T_{N}\mathbb{S}^{m}$ with \(R:=\|v\|\in(0,\pi)\).  Define the pulled-back Riemannian squared distance function on \(T_N\mathbb S^m\) by
\begin{equation}\label{eqn:pulled-back-sq-dist}
   g(u;v):=d^{2}\big(\exp_{N}u,\exp_{N}v\big). 
\end{equation}

For $a\in T_{N}\mathbb{S}^{m}$ with $\|a\|=1$, set
\[
\alpha:=\langle a,v\rangle,
\qquad
f_{a}(t):=g(ta;v)=d^{2}\big(\exp_{N}(ta),\exp_{N}v\big).
\]
Thus \(f_a\) is the line restriction of \(g(\cdot;v)\) along \(a\).  By the spherical cosine law in normal coordinates,
\begin{equation}\label{eqn:f_a}
\cos\big(d(\exp_{N}(ta),\exp_{N}v)\big)
=
\cos t\,\cos R
+
\sin t\,\frac{\sin R}{R}\,\alpha.
\end{equation}
Writing \(d(t):=d(\exp_{N}(ta),\exp_{N}v)\in(0,\pi)\), we have
\[
f_{a}(t)=d(t)^{2},
\qquad
d(t)=\arccos\!\Big(\cos t\,\cos R+\sin t\,\frac{\sin R}{R}\,\alpha\Big).
\]
A Taylor expansion of \(f_a\) at \(t=0\) gives
\[
f_{a}(t)=\sum_{k=0}^{4} C_{k}(R,\alpha)\,t^{k}+O(t^{5}),
\qquad (t\to 0),
\]
where
\begin{equation}\label{eqn:C}
\boxed{
\begin{aligned}
C_{0}(R,\alpha)&=R^{2},
\qquad
C_{1}(R,\alpha)=-2\alpha,\\
C_{2}(R,\alpha)&=\frac{R}{\tan R}+\alpha^{2}\Big(\frac{1}{R^{2}}-\frac{1}{R\tan R}\Big),\\
C_{3}(R,\alpha)&=A_{3}(R)\,\alpha+B_{3}(R)\,\alpha^{3},\\
C_{4}(R,\alpha)&=A_{4}(R)\,\alpha^{4}+A_{2}(R)\,\alpha^{2}+A_{0}(R).
\end{aligned}
}
\end{equation}
Here
\begin{equation}\label{eqn:A-B}
\boxed{
\begin{aligned}
A_{3}(R)&= - \frac{2}{3} + \frac{1}{\sin^{2} R} - \frac{1}{R \tan^{5} R}
- \frac{2\cos R}{R \sin^{3} R} + \frac{\cos R}{R \sin^{5} R},
\qquad
B_{3}(R)= \frac{2}{3 R^{2}} - \frac{1}{R^{2}\sin^{2} R} + \frac{1}{R^{3}\tan R},\\
A_{4}(R)&=\frac{\frac{6 R}{\tan R} - \frac{15 R \cos R}{\sin^{3} R} - 11 + \frac{15}{\sin^{2} R}}{12 R^{4}},\\
A_{2}(R)&=\frac{- \frac{4 R}{\tan^{5} R} + \frac{R \cos R}{\sin^{3} R} + \frac{4 R \cos R}{\sin^{5} R}
- \frac{7}{\tan^{6} R} + \frac{12}{\sin^{2} R} - \frac{21}{\sin^{4} R} + \frac{7}{\sin^{6} R}}{6 R^{2}},\\
A_{0}(R)&=
- \frac{R}{6 \tan^{7} R} + \frac{R \cos^{5} R}{4 \sin^{7} R} - \frac{R \cos R}{12 \sin^{7} R}
- \frac{1}{4 \tan^{8} R} - \frac{3}{4 \sin^{4} R} + \frac{5}{4 \sin^{6} R}
+ \frac{3 \cos^{6} R}{4 \sin^{8} R} - \frac{1}{2 \sin^{8} R}.
\end{aligned}
}
\end{equation}
The symbolic calculation of these Taylor coefficients is recorded in Section~\ref{scn:symbolic-computation}.

For each \(k\ge 1\) such that \(g(\cdot;v)\) is \(k\) times differentiable at \(0\), the line-restriction identity gives
\[
f_{a}^{(k)}(0)=D^{k}g(0;v)[a,\dots,a]=k!\,C_{k}(R,\alpha).
\]

Now write \(u\neq 0\) as \(u=ra\), with \(r:=\|u\|\) and \(\|a\|=1\).  Then \(g(ra;v)=f_a(r)\), and \(\alpha=\langle a,v\rangle=\langle u,v\rangle/\|u\|\).  Hence the expansion above yields
\[
g(u;v)=\sum_{k=0}^{4} P_{k}(u;v)+O(\|u\|^{5}),
\qquad
P_{k}(u;v):=\frac{1}{k!}D^{k}g(0;v)[u,\dots,u],
\]
where the homogeneous terms \(P_k\) are given by
\[
P_{k}(u;v)
= \|u\|^{k} \, C_k(R, \alpha)
=\|u\|^{k}\,C_{k}\!\left(R,\Big\langle\frac{u}{\|u\|},\,v\Big\rangle\right),
\qquad R:=\|v\|.
\]
In particular, for fixed \(v\) the expansion through degree \(4\) is
\begin{equation}\label{eqn:g-eqn}
\boxed{
\begin{aligned}
g(u;v)
&=R^{2}
-2\langle u,v\rangle\\[1mm]
&\quad+\Bigg(\frac{R}{\tan R}\,\|u\|^{2}
+\Big(\frac{1}{R^{2}}-\frac{1}{R\tan R}\Big)\,\langle u,v\rangle^{2}\Bigg)\\[1mm]
&\quad+\Big(A_{3}(R)\,\langle u,v\rangle\,\|u\|^{2}+B_{3}(R)\,\langle u,v\rangle^{3}\Big)\\[1mm]
&\quad+\Big(A_{4}(R)\,\langle u,v\rangle^{4}+A_{2}(R)\,\langle u,v\rangle^{2}\,\|u\|^{2}+A_{0}(R)\,\|u\|^{4}\Big)
+O(\|u\|^{5}),
\qquad R:=\|v\|.
\end{aligned}
}
\end{equation}
Here \(A_{3},B_{3},A_{4},A_{2},A_{0}\) are the functions of \(R\) defined in \eqref{eqn:A-B}.

\subsection{Second and fourth partial derivative tensors of the Riemannian squared distance in normal coordinates}
\label{subsec:derivatives-g}

We next extract from \eqref{eqn:g-eqn} the second and fourth derivatives of \(g(\cdot;v)\) at \(0\).  Fix \(v=R\Theta\), where \(R:=\|v\|\in(0,\pi)\) and \(\Theta\in\mathbb{S}^{m-1}\).  The derivatives below are always taken with respect to the \(u\)-variable, with \(v\) fixed.

Since the fourth derivative is a symmetric \(4\)-linear form, it is enough to know its values on the diagonal.  We record the elementary polarization fact used for this purpose.

\begin{definition}[\textbf{Symmetric $k$--linear form}]
Let \(V\) be a real vector space and let \(k\in\mathbb{N}\).  A \(k\)-linear form \(B:V^k\to\mathbb{R}\) is called symmetric if, for every permutation \(\sigma\in S_k\) and all \(w_1,\dots,w_k\in V\),
\[
B(w_1,\dots,w_k)=B(w_{\sigma(1)},\dots,w_{\sigma(k)}).
\]
\end{definition}

\begin{lemma}[\textbf{Polarization identity}]
\label{lem:polarization}
Let \(k\in\mathbb{N}\), and let \(B:V^k\to\mathbb{R}\) be a symmetric \(k\)-linear form.  Define the diagonal polynomial \(p:V\to\mathbb{R}\) by
\[
p(w):=B(w,\dots,w).
\]
Then \(B\) is uniquely determined by \(p\).  In particular, if \(p\equiv0\), then \(B\equiv0\).  Moreover,
\[
B(w_1,\dots,w_k)
=\frac{1}{2^k k!}\sum_{\varepsilon_i=\pm 1}
\Bigl(\prod_{i=1}^k \varepsilon_i\Bigr)\,
p\!\Bigl(\sum_{i=1}^k \varepsilon_i w_i\Bigr).
\]
\end{lemma}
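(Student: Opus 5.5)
The plan is to verify the explicit formula directly. Uniqueness of \(B\), and the vanishing statement, then follow at once, since the right-hand side depends only on \(p\).

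\emph{Expanding the right-hand side.} By multilinearity and symmetry,
\[
p\Bigl(\sum_i \varepsilon_i w_i\Bigr)=\sum_{j_1,\dots,j_k=1}^k \varepsilon_{j_1}\cdots\varepsilon_{j_k}\,B(w_{j_1},\dots,w_{j_k}).
\]
Multiplying by \(\prod_i\varepsilon_i\) and summing over all \(2^k\) sign vectors, we have to evaluate, for each index tuple \((j_1,\dots,j_k)\), the character sum
\[
\sum_{\varepsilon}\ \prod_{i=1}^k \varepsilon_i^{\,1+n_i},
\qquad n_i:=\#\{l: j_l=i\}.
\]
This sum factorizes over \(i\) as \(\prod_i\sum_{\varepsilon_i=\pm1}\varepsilon_i^{1+n_i}\). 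Each factor equals \(2\) if \(n_i\) is odd and \(0\) if \(n_i\) is even.

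\emph{Identifying the surviving terms.} A term survives only if every \(n_i\) is odd, in particular \(n_i\ge1\) for all \(i\). Since \(\sum_i n_i=k\), this forces \(n_i=1\) for every \(i\), so \((j_1,\dots,j_k)\) is a permutation of \((1,\dots,k)\). There are \(k!\) such tuples. Each contributes \(2^k B(w_1,\dots,w_k)\), because symmetry lets us reorder the arguments of \(B\). Hence the whole sum equals \(2^k k!\,B(w_1,\dots,w_k)\), and dividing by \(2^k k!\) gives the stated identity.

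\emph{Consequences.} Two symmetric forms with the same diagonal polynomial have the same right-hand side, hence coincide. Taking \(p\equiv 0\) gives \(B\equiv0\).

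The only step needing care is the parity bookkeeping in the character sum, which is routine. No earlier result from the paper is needed; the argument is pure multilinear algebra.
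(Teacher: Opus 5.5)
Your proof is correct and gives exactly the standard verification the paper leaves out (its proof reads only ``Standard''): the sign sum factors over the indices, parity forces every $n_i=1$, and this yields the factor $2^k k!$. One small point: expanding $p\bigl(\sum_i\varepsilon_i w_i\bigr)$ needs only multilinearity, while symmetry is used only to reorder the arguments of the $k!$ surviving permutation terms.
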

\begin{proof}
    Standard.
\end{proof}

\subsection{Explicit formulas for the Hessian and the fourth-order derivative of the squared distance}
In the Euclidean vector space \(T_NS^m\), one has
\(D^2_u\|u\|^2[w,w]=2\|w\|^2\) and
\(D^2_u\langle u,v\rangle^2[w,w]=2\langle w,v\rangle^2
=2R^2\langle w,\Theta\rangle^2\), where \(v=R\Theta\).
The quadratic and quartic parts of \eqref{eqn:g-eqn} therefore give, for every \(w\in T_N\mathbb S^m\),
\begin{equation}\label{eqn:Hess-g}
\boxed{
 D_u^2 g(0;R\Theta)[w,w]
=
2\frac{R}{\tan R}\|w\|^2
+
2\Bigl(1-\frac{R}{\tan R}\Bigr)\langle w,\Theta\rangle^2, 
}
\end{equation}

After writing \(v=R\Theta\), the homogeneous quartic part of the expansion above is
\[
    Q_4(u;R\Theta)
    =
    A_0(R)\|u\|^4
    +
    A_2(R)R^2\|u\|^2\langle u,\Theta\rangle^2
    +
    A_4(R)R^4\langle u,\Theta\rangle^4 .
\]
By the Taylor convention,
\[
    Q_4(z;R\Theta)
    =
    \frac{1}{4!}D_u^4g(0;R\Theta)[z,z,z,z],
    \qquad z\in T_N\mathbb S^m .
\]
Taking \(z=w\) gives

\begin{equation}\label{eqn:fourth-derivative-g}
\boxed{
   D_u^4 g(0;R\Theta)[w,w,w,w]
=
24\Bigl(
A_0(R)\|w\|^4
+
A_2(R)R^2\|w\|^2\langle w,\Theta\rangle^2
+
A_4(R)R^4\langle w,\Theta\rangle^4
\Bigr). 
}
\end{equation}

By Lemma~\ref{lem:polarization}, the full symmetric \(4\)-linear tensor \(D_u^4 g(0;R\Theta)\) is \textit{uniquely} determined by the displayed diagonal formula.


\section{Setup and derivative formulas}
\label{scn:common-setup-derivatives}
\label{scn:support-rot-symm}
\label{scn:support-general-density}

This section derives \textit{explicit formulas} for the Hessian (cf. Equation \eqref{eqn:Hessian-general-density}) and the fourth-order derivative tensors (cf. Equation \eqref{eqn:fourth-derivative-general-density}) for the population Fréchet function, assuming that the density assigns zero probability mass to a neighborhood of the cut locus. We first treat an arbitrary absolutely continuous density on \(\mathbb S^m\), not assumed to be rotationally symmetric, and then obtain the rotationally symmetric formulas as reductions. The pointwise expansion of the Riemannian squared-distance \(g(u;v)=d^2(\exp_Nu,\exp_Nv)\) is the one derived in Section~\ref{scn:TE-sqdist}; in particular, we use the coefficients \(A_0,A_2,A_4\) from \eqref{eqn:A-B} and the identities \eqref{eqn:Hess-g} and \eqref{eqn:fourth-derivative-g}.

\subsection{General densities and rotational symmetry}
\label{scn:set-up}
\label{scn:set-up-general-density}

Let \(\mu\) be an absolutely continuous probability measure on \(\mathbb S^m\).  After applying an isometry, we take the population Fréchet mean to be the North Pole \(N\).  Since the cut locus \(\operatorname{Cut}(N)=\{-N\}\) and \(\mu\) is absolutely continuous, \(\mu(\{-N\})=0\); also see Assumption \ref{assum:density-assumption}.  Thus \(V:=\exp_N^{-1}(X)\) is defined \(\mu\)-a.s., and we write \(V=R\Theta\), with \(R\in(0,\pi)\) and \(\Theta\in\mathbb S^{m-1}\).  In these coordinates, \(d\operatorname{vol}_{\mathbb S^m}=(\sin R)^{m-1}\,dR\,d\Theta\).

Assume that \(\widetilde\mu:=(\exp_N^{-1})_\#\mu\) has the following density (i.e. Radon-Nikodym derivative w.r.t. $d\operatorname{vol}_{\mathbb S^m}$) in the polar coordinates:
\begin{equation}
\label{eqn:general-density-polar}
    d\widetilde\mu(R,\Theta)=\rho(R,\Theta)(\sin R)^{m-1}\,dR\,d\Theta,
    \qquad
    \int_0^\pi\!\!\int_{\mathbb S^{m-1}}\rho(R,\Theta)(\sin R)^{m-1}\,d\Theta\,dR=1 .
\end{equation}
The \textit{lifted} Fréchet function in these polar coordinates is
\begin{equation}\label{eqn:Frechet-general-polar}
    \widetilde F_\mu(u):=F_\mu(\exp_Nu)
    =
    \int_0^\pi\!\!\int_{\mathbb S^{m-1}}
    g(u;R\Theta)\rho(R,\Theta)(\sin R)^{m-1}\,d\Theta\,dR .
\end{equation}
The rotationally symmetric case is the specialization \(\rho(R,\Theta)=\rho(R)\), giving
\begin{equation}
\label{eqn:Fréchet-rot-symm-double-int}
    \widetilde F_\mu(u)
    =
    \int_0^\pi\!\!\int_{\mathbb S^{m-1}}
    g(u;R\Theta)\rho(R)(\sin R)^{m-1}\,d\Theta\,dR .
\end{equation}

\begin{assumption}[\textbf{Cut-locus avoidance}]
\label{assum:density-assumption}
We assume that there exists \(\varepsilon_0>0\) such that \(\rho(R,\Theta)=0\) for a.e. \((R,\Theta)\) with \(R\in(\pi-\varepsilon_0,\pi)\).  In the rotationally symmetric case this means \(\rho(R)=0\) for \(R\) sufficiently close to \(\pi\).  Under this assumption, \(\widetilde F_\mu\) is smooth near \(0\in T_N\mathbb S^m\), and differentiation under the integral sign at \(u=0\) is justified; see Proposition~3.2.1 of \citet{Tran2020Sampling}. There the author justified this for order up to $4,$ but the same argument carries over to any order of derivatives.
\end{assumption}

\subsection{Formula for the Hessian of the Fréchet function, and its rotationally symmetric reduction}
\label{para:Hess-general-density}

\begin{proposition}[\textbf{Formula for the Hessian of the Fréchet function}]
\label{prop:Hessian-common}
Assume Assumption~\ref{assum:density-assumption}, and set the Hessian of the Fréchet function $F$ at $\mubar$, or equivalently, the Hessian of the lifted Fréchet function $\widetilde{F}_\mu$ at $0\in T_{\mubar}\mathbb{S}^m$ to be  \(H:=D^2\widetilde F_\mu(0)\).  Then
\begin{equation}
\label{eqn:Hessian-general-density}
\boxed{
\begin{aligned}
    H
    =
    2\int_0^\pi\!\!\int_{\mathbb S^{m-1}}
    \left[
    \frac{R}{\tan R}I_m+
    \left(1-\frac{R}{\tan R}\right)\Theta\Theta^{\mathsf T}
    \right]
    \rho(R,\Theta)(\sin R)^{m-1}\,d\Theta\,dR .
\end{aligned}}
\end{equation}
Equivalently,
\begin{equation}
\label{eqn:Hessian-general-bilinear-form}
    w^{\mathsf T}Hw
    =
    2\int_0^\pi\!\!\int_{\mathbb S^{m-1}}
    \left[
    \frac{R}{\tan R}\|w\|^2+
    \left(1-\frac{R}{\tan R}\right)\langle w,\Theta\rangle^2
    \right]    \rho(R,\Theta)(\sin R)^{m-1}\,d\Theta\,dR .
\end{equation}
In the rotationally symmetric case, define
\begin{equation}\label{eqn:b}
    b_m(R):=1+(m-1)R\cot R .
\end{equation}
Then
\begin{equation}
\label{eqn:Hess}
\boxed{
    H
    =
    \frac{2\operatorname{vol}(\mathbb S^{m-1})}{m}
    \left(\int_0^\pi \rho(R)(\sin R)^{m-1}b_m(R)\,dR\right)I_m .
}
\end{equation}
Thus, under rotational symmetry, the Hessian is singular\textit{ if and only if} it vanishes \textit{if and only if }the scalar integral in \eqref{eqn:Hess} vanishes.
\end{proposition}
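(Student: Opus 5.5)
The plan is to differentiate under the integral sign in \eqref{eqn:Frechet-general-polar} and then insert the pointwise Hessian formula \eqref{eqn:Hess-g}. First, Assumption~\ref{assum:density-assumption} keeps the support of \(\rho\) in \(\{R\le \pi-\varepsilon_0\}\), where \(g(\cdot;R\Theta)\) is smooth near \(u=0\) and its derivatives are bounded uniformly in \((R,\Theta)\). Hence we may write
\[
D^2\widetilde F_\mu(0)[w,w]=\int_0^\pi\!\!\int_{\mathbb S^{m-1}} D_u^2 g(0;R\Theta)[w,w]\,\rho(R,\Theta)(\sin R)^{m-1}\,d\Theta\,dR .
\]
Substituting \eqref{eqn:Hess-g} gives \eqref{eqn:Hessian-general-bilinear-form} directly. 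Since \(w^{\mathsf T}\big(cI_m+(1-c)\Theta\Theta^{\mathsf T}\big)w=c\|w\|^2+(1-c)\langle w,\Theta\rangle^2\), polarization (Lemma~\ref{lem:polarization} with \(k=2\)) identifies the symmetric matrix \(H\) with the integral in \eqref{eqn:Hessian-general-density}. The factor \(R/\tan R\) is bounded on \((0,\pi-\varepsilon_0]\), since it tends to \(1\) as \(R\to0\), so the integrals converge.

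For the rotationally symmetric case \(\rho(R,\Theta)=\rho(R)\), I would first do the \(\Theta\)-integral using the two standard identities
\[
\int_{\mathbb S^{m-1}}d\Theta=\operatorname{vol}(\mathbb S^{m-1}),\qquad
\int_{\mathbb S^{m-1}}\Theta\Theta^{\mathsf T}\,d\Theta=\frac{\operatorname{vol}(\mathbb S^{m-1})}{m}I_m .
\]
The second identity follows from rotation invariance, which forces the integral to be a multiple of \(I_m\), together with taking the trace, which equals \(\operatorname{vol}(\mathbb S^{m-1})\) because \(\|\Theta\|=1\). The bracket then integrates to
\[
\operatorname{vol}(\mathbb S^{m-1})\Big[\tfrac{R}{\tan R}+\tfrac1m\big(1-\tfrac{R}{\tan R}\big)\Big]I_m=\frac{\operatorname{vol}(\mathbb S^{m-1})}{m}\big(1+(m-1)R\cot R\big)I_m ,
\]
which is \(\frac{\operatorname{vol}(\mathbb S^{m-1})}{m}\,b_m(R)\,I_m\). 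Integrating in \(R\) against \(\rho(R)(\sin R)^{m-1}\) and multiplying by \(2\) yields \eqref{eqn:Hess}.

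The final equivalence follows because \(H\) is a scalar multiple \(cI_m\) of the identity. Such a matrix is singular exactly when \(c=0\), that is, exactly when \(H=0\), and since \(2\operatorname{vol}(\mathbb S^{m-1})/m>0\) this happens exactly when the scalar integral vanishes.

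The only real obstacle is justifying the interchange of derivative and integral. This is delegated to Assumption~\ref{assum:density-assumption} and the cited Proposition~3.2.1 of \citet{Tran2020Sampling}, together with the uniform boundedness of the second derivatives of \(g\) away from the cut locus. Everything else is algebra.
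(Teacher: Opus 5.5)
Your proposal is correct and follows the paper's proof essentially step for step. You differentiate under the integral sign, justified by Assumption~\ref{assum:density-assumption}, and insert the pointwise Hessian \eqref{eqn:Hess-g}; in the rotationally symmetric case you use $\int_{\mathbb S^{m-1}}\Theta\Theta^{\mathsf T}\,d\Theta=\frac{\operatorname{vol}(\mathbb S^{m-1})}{m}I_m$ and conclude from $H$ being a scalar multiple of $I_m$. The only differences are cosmetic: you derive that moment identity from rotation invariance plus a trace computation where the paper cites Funk--Hecke, and you spell out the derivative-bound justification for the interchange slightly more explicitly.
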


\begin{proof}
By \eqref{eqn:Frechet-general-polar}, Assumption~\ref{assum:density-assumption}, and the pointwise Hessian formula \eqref{eqn:Hess-g}, differentiating under the integral sign gives \eqref{eqn:Hessian-general-bilinear-form}, hence \eqref{eqn:Hessian-general-density}.  If \(\rho(R,\Theta)=\rho(R)\), then using the Funk--Hecke formula (see \cite{ChafaiFunkHecke} for example)
\[
    \int_{\mathbb S^{m-1}}\langle w,\Theta\rangle^2\,d\Theta
    =
    \frac{\operatorname{vol}(\mathbb S^{m-1})}{m}\|w\|^2,
\]
 \eqref{eqn:Hess} follows.  The final assertion follows because the Hessian is then a scalar multiple of \(I_m\).
\end{proof}

\subsection{Fourth-order derivatives of the Fréchet function and rotationally symmetric reduction}
\label{para:fourth-derivative-general-density}

\begin{proposition}[\textbf{Fourth derivative formula}]
\label{prop:fourth-common}
\label{prop:Hess-fourth-Fréchet-rot-symm}
\label{prop:Hess-fourth-Frechet-rot-symm}
Assume Assumption~\ref{assum:density-assumption}.  Then, for every \(w\in T_N\mathbb S^m\), the fourth derivative of the lifted Fréchet function is given by:
\begin{equation}
\label{eqn:fourth-derivative-general-density}
\boxed{
\begin{aligned}
    D^4\widetilde F_\mu(0)[w,w,w,w]
    =
    24\int_0^\pi\!\!\int_{\mathbb S^{m-1}}
    \Big[
    &A_0(R)\|w\|^4+
    A_2(R)R^2\|w\|^2\langle w,\Theta\rangle^2 \\
    &+
    A_4(R)R^4\langle w,\Theta\rangle^4
    \Big]
    \rho(R,\Theta)(\sin R)^{m-1}\,d\Theta\,dR .
\end{aligned}}
\end{equation}
Let \(d\sigma(w)\) be \textit{normalized} surface measure on \(\mathbb S^{m-1}\subset T_N\mathbb S^m\), and define
\begin{equation}
\label{eqn:h}
    h_m(R):=
    A_0(R)+\frac{1}{m}A_2(R)R^2+\frac{3}{m(m+2)}A_4(R)R^4 .
\end{equation}
Then the averaged fourth derivative on $\mathbb S^{m-1}$ is given by:
\begin{equation}
\label{eqn:averaged-fourth-derivative-general-density}
\boxed{
    \int_{\mathbb S^{m-1}}D^4\widetilde F_\mu(0)[w,w,w,w]\,d\sigma(w)
    =
    24\int_0^\pi\!\!\int_{\mathbb S^{m-1}}
    h_m(R)\rho(R,\Theta)(\sin R)^{m-1}\,d\Theta\,dR .
}
\end{equation}
In the rotationally symmetric case,
\begin{equation}
\label{eqn:fourth-derivative}
\boxed{
    D^4\widetilde F_\mu(0)[w,w,w,w]
    =
    24\operatorname{vol}(\mathbb S^{m-1})
    \left(\int_0^\pi \rho(R)(\sin R)^{m-1}h_m(R)\,dR\right)\|w\|^4 .
}
\end{equation}
Thus, under rotational symmetry, the fourth derivative tensor vanishes if and only if the scalar integral in \eqref{eqn:fourth-derivative} vanishes.
\end{proposition}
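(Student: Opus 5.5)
The argument runs parallel to the proof of Proposition~\ref{prop:Hessian-common}, with the quartic identity \eqref{eqn:fourth-derivative-g} replacing \eqref{eqn:Hess-g}, and it has three steps.

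\emph{First step: the general formula.} Under Assumption~\ref{assum:density-assumption}, the integrand in \eqref{eqn:Frechet-general-polar} is supported in \(R\le\pi-\varepsilon_0\). On the set \(\{\|u\|\le\delta,\ R\le \pi-\varepsilon_0\}\), with \(\delta<\varepsilon_0/2\), the function \(g(u;R\Theta)\) is smooth in \(u\). Its \(u\)-derivatives up to order four are bounded uniformly in \((R,\Theta)\). Differentiation under the integral sign four times at \(u=0\) is therefore justified, as recorded in the assumption. Evaluating on the diagonal \([w,w,w,w]\) and substituting \eqref{eqn:fourth-derivative-g} pointwise gives \eqref{eqn:fourth-derivative-general-density} directly.

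\emph{Second step: averaging over directions.} I integrate \eqref{eqn:fourth-derivative-general-density} over \(w\in\mathbb S^{m-1}\) against \(d\sigma\) and use Fubini, which is legitimate since everything is bounded on a compact set. This requires the standard normalized spherical moments
\[
\int_{\mathbb S^{m-1}}\|w\|^4\,d\sigma=1,\qquad
\int_{\mathbb S^{m-1}}\langle w,\Theta\rangle^2\,d\sigma(w)=\frac1m,\qquad
\int_{\mathbb S^{m-1}}\langle w,\Theta\rangle^4\,d\sigma(w)=\frac{3}{m(m+2)},
\]
valid for every unit \(\Theta\). The second follows by summing over coordinates. The fourth moment follows from Gaussian moments: write \(w=Z/\|Z\|\) with \(Z\sim N(0,I_m)\). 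Since \(\|Z\|\) and \(Z/\|Z\|\) are independent, \(E Z_1^4=3=E\|Z\|^4\cdot E w_1^4\), and \(E\|Z\|^4=m(m+2)\). With these moments the bracket averages exactly to \(h_m(R)\), which proves \eqref{eqn:averaged-fourth-derivative-general-density}.

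\emph{Third step: the rotationally symmetric case.} Here \(\rho=\rho(R)\). I fix \(w\) and integrate over \(\Theta\) first, using the same moments but now in \(\Theta\) with the unnormalized measure \(d\Theta\):
\[
\int\langle w,\Theta\rangle^2d\Theta=\frac{\operatorname{vol}(\mathbb S^{m-1})}{m}\|w\|^2,\qquad
\int\langle w,\Theta\rangle^4d\Theta=\frac{3\operatorname{vol}(\mathbb S^{m-1})}{m(m+2)}\|w\|^4.
\]
The \(\Theta\)-integral of the bracket then equals \(\operatorname{vol}(\mathbb S^{m-1})h_m(R)\|w\|^4\), and \eqref{eqn:fourth-derivative} follows.

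\emph{Final assertion.} If the scalar integral in \eqref{eqn:fourth-derivative} vanishes, the diagonal polynomial \(w\mapsto D^4\widetilde F_\mu(0)[w,w,w,w]\) is identically zero, so Lemma~\ref{lem:polarization} gives that the full symmetric tensor vanishes. Conversely, if the tensor vanishes, evaluating at any unit \(w\) shows the integral is zero.

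The only point needing care is the fourth spherical moment constant \(3/(m(m+2))\) and keeping the normalized and unnormalized measures straight; the rest is bookkeeping.
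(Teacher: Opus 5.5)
Your proposal is correct and follows the paper's proof essentially line for line. You integrate the pointwise quartic identity \eqref{eqn:fourth-derivative-g} against the density, average over directions with the spherical moments $1/m$ and $3/(m(m+2))$ (normalized, then unnormalized in the rotationally symmetric case), and close with Lemma~\ref{lem:polarization}. The only differences are cosmetic: you derive the fourth moment from the Gaussian representation $w=Z/\|Z\|$ instead of citing Funk--Hecke, and you spell out the uniform derivative bounds behind differentiating under the integral sign, which the paper delegates to Assumption~\ref{assum:density-assumption}.
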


\begin{proof}
Equation \eqref{eqn:fourth-derivative-general-density} follows by integrating the pointwise fourth derivative formula \eqref{eqn:fourth-derivative-g} against \eqref{eqn:general-density-polar}.  For fixed \(\Theta\), the normalized spherical moments are again immediate consequences of the Funk-Hecke formula, see \cite{ChafaiFunkHecke} for example:
\[
    \int_{\mathbb S^{m-1}}\langle w,\Theta\rangle^2\,d\sigma(w)=\frac1m,
    \qquad
    \int_{\mathbb S^{m-1}}\langle w,\Theta\rangle^4\,d\sigma(w)=\frac{3}{m(m+2)}.
\]
This gives \eqref{eqn:averaged-fourth-derivative-general-density}.  If \(\rho(R,\Theta)=\rho(R)\), the corresponding unnormalized identities give \eqref{eqn:fourth-derivative}.  The final assertion follows from Lemma~\ref{lem:polarization}, since the diagonal quartic form is a scalar multiple of \(\|w\|^4\).
\end{proof}

\section{Radial threshold functions \(b_m,h_m\) and their zeros}
\label{scn:asymptotics-b-h}

Under rotational symmetry, \eqref{eqn:Hess} and \eqref{eqn:fourth-derivative} show that the Hessian and fourth derivative of the lifted Fréchet function are governed by the radial functions \(b_m\) and \(h_m\), defined in \eqref{eqn:b} and \eqref{eqn:h}. We record only the sign properties and zeros \(R_m,S_m\) of these functions that are needed for the fixed-dimensional support theorems. The high-dimensional location of \(R_m\) and \(S_m\) is not used in the main argument and is recorded in Section~\ref{app:asymptotics-Rm-Sm}.
\subsection{\textbf{Analysis of $b_m$.}}
\begin{proposition}[\textbf{The Hessian threshold}]
\label{prop:zero_bm}
Let \(m\ge2\), and following \eqref{eqn:b}, let \(b_m:(0,\pi)\to\mathbb R\) be defined by
\[
    b_m(R)=1+(m-1)R\cot R.
\]
Then \(b_m\) is strictly decreasing on \((0,\pi)\), has a unique zero
\begin{equation}\label{eqn:R_m}
    \boxed{R_m\in(\pi/2,\pi),}
\end{equation}

and satisfies \(b_m(R)>0\) for \(R\in(0,R_m)\) and \(b_m(R)<0\) for \(R\in(R_m,\pi)\).
\end{proposition}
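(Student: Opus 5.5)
The plan is to reduce everything to monotonicity of the auxiliary function \(\phi(R):=R\cot R\) on \((0,\pi)\). Since \(m\ge2\), we have \(m-1\ge1>0\), and \(b_m=1+(m-1)\phi\) is a positive affine image of \(\phi\). So strict monotonicity of \(b_m\) and the location of its zero follow from the corresponding properties of \(\phi\). The intermediate value theorem then gives existence, and strict monotonicity gives uniqueness and the sign pattern.

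\textbf{Step 1: \(\phi\) is strictly decreasing.} Differentiate to get
\[
\phi'(R)=\cot R-\frac{R}{\sin^2R}=\frac{\sin R\cos R-R}{\sin^2R}=\frac{\tfrac12\sin 2R-R}{\sin^2 R}.
\]
The elementary inequality \(\sin x<x\) for \(x>0\), applied with \(x=2R\), gives \(\tfrac12\sin2R<R\). Hence \(\phi'(R)<0\) on \((0,\pi)\), so \(\phi\), and therefore \(b_m\), is strictly decreasing. This is the only step with any content, and it is routine.

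\textbf{Step 2: boundary behaviour and the zero.} As \(R\to0^+\), \(\phi(R)=\tfrac{R}{\sin R}\cos R\to1\), so \(b_m\to m>0\). As \(R\to\pi^-\), \(\cos R\to-1\) and \(\sin R\to0^+\) with \(R\to\pi\), so \(\phi(R)\to-\infty\) and \(b_m\to-\infty\). By continuity and the intermediate value theorem, \(b_m\) has a zero, and strict monotonicity makes it unique. Call it \(R_m\). The sign statements \(b_m>0\) on \((0,R_m)\) and \(b_m<0\) on \((R_m,\pi)\) are then immediate.

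\textbf{Step 3: locating \(R_m\).} At \(R=\pi/2\) we have \(\cot(\pi/2)=0\), so \(b_m(\pi/2)=1>0\). By monotonicity, \(R_m>\pi/2\). For \(R\in(0,\pi/2]\), \(\phi(R)\ge0\) gives \(b_m\ge1\), which confirms there is no zero there, so \(R_m\in(\pi/2,\pi)\) as claimed in \eqref{eqn:R_m}.
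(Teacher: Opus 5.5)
Your proof is correct and follows essentially the same route as the paper: you show that \(R\cot R\) is strictly decreasing from the sign of \(\sin R\cos R-R\), use the boundary limits \(1\) and \(-\infty\), and use \(b_m(\pi/2)=1\) to place the zero in \((\pi/2,\pi)\). The only cosmetic difference is that you invoke \(\sin x<x\) at \(x=2R\) directly, whereas the paper differentiates \(s(R)=\sin R\cos R-R\) to get \(s'(R)=\cos 2R-1<0\) with \(s(0)=0\).
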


\begin{proof}
Set \(q(R):=R\cot R\). Then
\[
    q'(R)=\cot R-R\csc^2R=\frac{\sin R\cos R-R}{\sin^2R}.
\]
The numerator \(s(R):=\sin R\cos R-R\) satisfies \(s(0)=0\) and
\(s'(R)=\cos(2R)-1<0\) on \((0,\pi)\). Hence \(q'(R)<0\), so \(b_m\) is strictly decreasing.
Moreover, \(q(R)\to1\) as \(R\downarrow0\), while \(q(R)\to-\infty\) as \(R\uparrow\pi\).
Thus \(b_m\) has a unique zero. Since \(q(\pi/2)=0\), this zero lies in \((\pi/2,\pi)\), and the
stated signs follow from strict monotonicity.
\end{proof}
\subsection{\textbf{Analysis of $h_m$}}

For the quartic coefficient \(h_m\), substituting the explicit coefficient functions \(A_0,A_2,A_4\)
from \eqref{eqn:A-B} into \eqref{eqn:h} gives the following simplified form. With \(c=\cot R\),
\begin{equation}
\label{eqn:h_m_simplified}
\boxed{
    h_m(R)=
    -\frac{m-1}{12m(m+2)}
    \left\{
    3(m-3)Rc^3+(m-7)Rc-3(m-3)c^2+4
    \right\}.
}
\end{equation}
In particular \(h_m(\pi/2)=-(m-1)/(3m(m+2))<0\), by continuous extension.

\begin{proposition}[\textbf{Sign and zeros of the radial quartic coefficient \(h_m\)}]
\label{prop:sign_h_m_2_3_and_zero_m_ge_4}
The following hold.

\emph{(i)} \(h_2(R)<0\) for all \(R\in(0,\pi)\).

\emph{(ii)} \(h_3(R)<0\) for all \(R\in(0,\pi)\).

\emph{(iii)} If \(m\ge4\), then \(h_m\) has a zero in \((\pi/2,\pi)\). We denote the first zero by
\begin{equation}\label{eqn:S_m}
     \boxed{
    S_m:=\inf\{R\in(\pi/2,\pi):h_m(R)=0\}.
    }
\end{equation}

\end{proposition}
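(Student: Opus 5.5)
We work throughout with the simplified form \eqref{eqn:h_m_simplified}. Since the prefactor $-(m-1)/(12m(m+2))$ is strictly negative for $m\ge2$, the sign of $h_m$ is opposite to the sign of
\[
    P_m(R):=3(m-3)Rc^3+(m-7)Rc-3(m-3)c^2+4,\qquad c=\cot R .
\]
So (i) and (ii) amount to showing $P_2>0$ and $P_3>0$ on $(0,\pi)$, and (iii) amounts to showing that $P_m$ has a zero in $(\pi/2,\pi)$ when $m\ge4$.

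\emph{Case $m=3$.} Here $P_3(R)=4-4Rc=4(1-R\cot R)$. By the proof of Proposition~\ref{prop:zero_bm}, $q(R)=R\cot R$ is strictly decreasing with $q(0^+)=1$. Hence $q<1$ on $(0,\pi)$, so $P_3>0$ and $h_3<0$.

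\emph{Case $m=2$.} Here $P_2(R)=-3Rc^3-5Rc+3c^2+4$. On $[\pi/2,\pi)$ we have $c\le0$, so every term is nonnegative and $P_2\ge4>0$.

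On $(0,\pi/2)$ we have $c>0$, and we write $P_2=3c^2(1-Rc)+(4-5Rc)$. Set $x=Rc\in(0,1)$.
\begin{itemize}
\item If $x\le 4/5$, both summands are nonnegative, and the constant $4$ makes $P_2>0$.
\item If $x>4/5$ (small $R$), we need $3c^2(1-x)>5x-4$. We will use the standard bound $1-R\cot R\ge R^2/3$ on $(0,\pi/2)$, which follows from the series $R\cot R=1-\sum_{k\ge1} a_kR^{2k}$ with $a_k>0$. It gives $3c^2(1-x)\ge c^2R^2=x^2$. Then $x^2-5x+4=(1-x)(4-x)>0$ for $x<1$, which finishes this case.
\end{itemize}
I expect this small-$R$ regime for $m=2$ to be the only delicate point. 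The inequality $1-R\cot R\ge R^2/3$ must be justified carefully, for instance via the partial-fraction series $R\cot R=1-2\sum_{n\ge1} R^2/(n^2\pi^2-R^2)$, whose first term already exceeds $R^2/3$ in total since $2\sum_n 1/(n^2\pi^2)=1/3$ and each term only increases with $R$.

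\emph{Case $m\ge4$.} We use the intermediate value theorem on $(\pi/2,\pi)$.
\begin{itemize}
\item At $R=\pi/2$ we have $c=0$, so $P_m(\pi/2)=4>0$; equivalently $h_m(\pi/2)<0$, as already noted.
\item As $R\uparrow\pi$, $c\to-\infty$. The dominant term is $3(m-3)Rc^3\sim3(m-3)\pi c^3\to-\infty$, since $m-3>0$. It beats $-3(m-3)c^2$ and $(m-7)Rc$ because those are of lower order in $|c|$. Hence $P_m\to-\infty$ and $h_m\to+\infty$.
\end{itemize}
Continuity of $h_m$ on $(\pi/2,\pi)$ then gives a zero there.

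It remains to see that $S_m$ is a genuine first zero. The zero set of $h_m$ is closed in $(\pi/2,\pi)$, and $h_m(\pi/2)<0$ with continuity keeps zeros away from a neighbourhood of $\pi/2$. So the infimum is attained, is itself a zero, and lies strictly inside $(\pi/2,\pi)$.
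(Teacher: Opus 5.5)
Your proof is correct. For (ii) and (iii) it is essentially the paper's argument. For $m=3$ the paper checks $h_3=\tfrac{2}{45}(R\cot R-1)<0$ by splitting at $\pi/2$, while you use the monotonicity of $R\cot R$ from Proposition~\ref{prop:zero_bm}. For $m\ge4$ the paper gets $h_m\to+\infty$ from the explicit expansion $h_m(\pi-t)=\frac{\pi(m-1)(m-3)}{4m(m+2)}t^{-3}+O(t^{-2})$, while you use dominance of the $c^3$ term; you are also slightly more explicit that the infimum defining $S_m$ is attained.

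The genuine difference is $m=2$ on $(0,\pi/2)$. The paper writes the numerator of $h_2$ as $-\cos R\,F(R)$ with $F(R)=(3+\sin^2R)\tan R-R(3+2\sin^2R)$. It then shows $F(0)=0$ and $F'(R)=4\tan R\bigl(\tan R-R/(1+\tan^2R)\bigr)>0$, a self-contained calculus argument needing only $\tan R>R$. You instead set $x=R\cot R$, write $P_2=3c^2(1-x)+(4-5x)$, and import the classical bound $1-R\cot R\ge R^2/3$ (from the positive-coefficient series or the partial fractions of $\cot$). This gives $P_2\ge x^2-5x+4=(1-x)(4-x)>0$. Your route is shorter and expresses everything through the same quantity $R\cot R$ that governs $b_m$, at the price of an external series identity. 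Your case split at $x=4/5$ is unnecessary: the bound $P_2\ge(1-x)(4-x)$ holds for every $x\in(0,1)$, so it covers all of $(0,\pi/2)$ at once.
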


\begin{proof}
(i) For \(m=2\), direct substitution gives
\[
    h_2(R)=
    \frac{R\cos R(3+2\sin^2R)-\sin R(3+\sin^2R)}{96\sin^3R}.
\]
The denominator is positive on \((0,\pi)\). On \((\pi/2,\pi)\), the numerator is plainly negative.
On \((0,\pi/2)\), set \(F(R):=(3+\sin^2R)\tan R-R(3+2\sin^2R)\). Then the numerator is
\(-\cos R\,F(R)\), and
\[
    F'(R)=4\tan R\left(\tan R-\frac{R}{1+\tan^2R}\right)>0,
\]
because \(\tan R>R>R/(1+\tan^2R)\). Since \(F(0)=0\), this proves \(h_2<0\).

(ii) For \(m=3\), one obtains the exact identity \(h_3(R)=\frac{2}{45}(R\cot R-1)\). This is negative
on \((0,\pi/2)\) because \(\tan R>R\), and it is negative on \((\pi/2,\pi)\) because \(\cot R<0\).

(iii) Let \(m\ge4\). We already observed from \eqref{eqn:h_m_simplified} that \(h_m(\pi/2)<0\). Also,
as \(t=\pi-R\downarrow0\), one has \(\cot(\pi-t)=-t^{-1}+O(t)\), and therefore
\[
    h_m(\pi-t)
    =
    \frac{\pi(m-1)(m-3)}{4m(m+2)}\,t^{-3}+O(t^{-2}).
\]
The leading coefficient is positive for \(m\ge4\), so \(h_m(R)\to+\infty\) as \(R\uparrow\pi\).
By continuity, \(h_m\) has a zero in \((\pi/2,\pi)\). Since \(h_m<0\) near \(\pi/2\), its first zero
\(S_m\) is well defined and belongs to \((\pi/2,\pi)\).
\end{proof}

\begin{lemma}[\textbf{Sign of \(h_m\) below \(S_m\) and ordering of \(R_m,S_m\)}]
\label{lem:Rm-Sm-ordering}
For every \(m\ge4\),
\[
    \boxed{
    h_m(R)<0\quad\text{for all }0<R<S_m,
    \qquad
    R_m<S_m.
    }
\]
\end{lemma}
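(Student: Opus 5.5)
The plan is to work entirely with the simplified form \eqref{eqn:h_m_simplified}. Since \(m\ge4\), the prefactor \(-\frac{m-1}{12m(m+2)}\) is negative, so \(h_m(R)<0\) is equivalent to \(P_m(R)>0\), where
\[
P_m(R):=3(m-3)Rc^3+(m-7)Rc-3(m-3)c^2+4,\qquad c=\cot R .
\]
I will split the argument into three parts: the range \((0,\pi/2]\), the range \((\pi/2,S_m)\), and the location of \(R_m\).

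\emph{Step 1: \((0,\pi/2)\).} Here I use the fact that \(P_m\) is affine in \(m\), namely \(P_m=mU+W\) with
\[
U:=3c^2(Rc-1)+Rc,\qquad W:=9c^2-9Rc^3-7Rc+4 .
\]
Then \(P_m=(m-3)U+P_3\). By Proposition~\ref{prop:sign_h_m_2_3_and_zero_m_ge_4}(ii), \(h_3<0\), and hence \(P_3>0\) on \((0,\pi)\). It therefore suffices to show \(U>0\) on \((0,\pi/2)\). With \(t=\tan R>0\) one has
\[
U=\frac{3(R-t)+Rt^2}{t^3},
\]
so the claim reduces to \(f(R):=R(3+t^2)-3t>0\). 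Now \(f(0)=0\) and \(f'(R)=2t\bigl(R(1+t^2)-t\bigr)\). The function \(R(1+t^2)-t\) vanishes at \(0\) and has derivative \(2Rt(1+t^2)>0\). Hence \(f'>0\), so \(f>0\) and \(U>0\). At \(R=\pi/2\) itself, we already have \(h_m(\pi/2)<0\).

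\emph{Step 2: \((\pi/2,S_m)\).} By the definition \eqref{eqn:S_m} of \(S_m\), \(h_m\) has no zero in \((\pi/2,S_m)\). Since \(h_m\) is continuous there and \(h_m(\pi/2)<0\), it follows that \(h_m<0\) on this interval. Together with Step 1 this gives the first claim.

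\emph{Step 3: \(R_m<S_m\).} This is the step requiring an actual estimate. By Proposition~\ref{prop:zero_bm}, \(R_m>\pi/2\), and it suffices to show \(h_m\) has no zero in \((\pi/2,R_m]\). On this interval \(b_m\ge0\), i.e. \(Rc\in[-\delta,0)\) with \(\delta=1/(m-1)\). Set \(x:=-Rc\in(0,\delta]\). Then \(c^2=x^2/R^2<4x^2/\pi^2<x^2/2\) because \(R>\pi/2\), and
\[
P_m=4-(m-7)x-3(m-3)c^2(1+x)\;\ge\;4-\frac{|m-7|}{m-1}-\frac32\,\frac{(m-3)}{(m-1)^2}\,\frac{m}{m-1}.
\]
The middle term is \(<1\). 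The last term is at most \(\tfrac32\cdot\tfrac{m(m-3)}{(m-1)^3}<\tfrac32\cdot\tfrac1{m-1}\le\tfrac12\) for \(m\ge4\). Hence \(P_m>2>0\), so \(h_m<0\) on \((\pi/2,R_m]\), and therefore \(S_m>R_m\).

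The main obstacle I expect is Step 1 near \(R=0\), where the terms of \(P_m\) cancel to leading order. The affine-in-\(m\) decomposition is what avoids a delicate Taylor analysis: it reduces everything to the known case \(m=3\) plus an elementary tangent inequality.
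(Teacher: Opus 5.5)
Your proof is correct, and its overall structure matches the paper's. You split at $\pi/2$. On $(0,\pi/2)$ you use an affine-in-$m$ decomposition built on the same auxiliary function $Rt^2-3t+3R$ (your $t^3U$, the paper's $A$). On $(\pi/2,R_m]$ you bound the bracket via $x=-R\cot R\le 1/(m-1)$ and $R>\pi/2$, exactly as the paper does with $y$.

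The one genuine difference is the base point of the decomposition in Step 1. The paper writes $G_m=G_4+(m-4)A$ and must prove $G_4>0$ by a separate second-derivative argument (through $t+2t^3-R(1+t^2)$ and $3t-R$). You anchor at $m=3$ instead, where $t^3P_3=G_3=4t^2(t-R)$ is positive immediately from $\tan R>R$. Equivalently, this positivity is the identity $h_3=\tfrac{2}{45}(R\cot R-1)$ of Proposition~\ref{prop:sign_h_m_2_3_and_zero_m_ge_4}(ii). Since $G_4=G_3+A$, your decomposition makes the paper's $G_4$ computation unnecessary, so your route is strictly shorter.

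In Step 2 you are also more explicit than the paper. The intermediate value theorem, applied from $h_m(\pi/2)<0$ across the zero-free interval $(\pi/2,S_m)$, is what actually gives negativity there; the paper leaves this implicit.

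There is one harmless slip in Step 3. For $m=4$ the middle term $|m-7|/(m-1)$ equals $1$, not $<1$. The conclusion survives, since the last term is $<1/2$ and hence $P_m>5/2$. For $m\le 7$ you could simply drop $-(m-7)x\ge 0$, as the paper does by writing $\max\{m-7,0\}$.
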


\begin{proof}
We first show that \(h_m<0\) on \((0,\pi/2)\). Put \(t:=\tan R>0\). In
\eqref{eqn:h_m_simplified}, the expression in braces, multiplied by the positive factor \(t^3\), becomes
\[
    G_m(R):=4t^3+(m-7)Rt^2-3(m-3)t+3(m-3)R.
\]
Write \(G_m=G_4+(m-4)A\), where
\(A(R):=Rt^2-3t+3R\) and \(G_4(R):=4t^3-3Rt^2-3t+3R\). A direct calculation gives
\[
    A'(R)=2t\{R(1+t^2)-t\},\qquad
    G_4'(R)=6t\{t+2t^3-R(1+t^2)\}.
\]
The functions in braces extend continuously at \(0\) and vanish there, and their derivatives are respectively
\(2Rt(1+t^2)>0\) and \(2t(1+t^2)(3t-R)>0\), using \(\tan R>R\). Hence \(A>0\) and
\(G_4>0\) on \((0,\pi/2)\), so \(G_m>0\). By \eqref{eqn:h_m_simplified}, \(h_m<0\) on
\((0,\pi/2)\).

Next consider \(R\in(\pi/2,R_m]\), and set \(y:=-R\cot R\). Proposition~\ref{prop:zero_bm}
gives \(0<y\le1/(m-1)\). The expression in braces in \eqref{eqn:h_m_simplified} equals
\[
    4-(m-7)y-\frac{3(m-3)}{R^2}(y^2+y^3).
\]
Using \(R>\pi/2\), \(y\le1/(m-1)\), and \(\pi^2>9\), this is bounded below by
\[
    4-\frac{\max\{m-7,0\}}{m-1}
      -\frac{4m(m-3)}{3(m-1)^3}>0.
\]
Indeed, for \(m<7\) the middle term is absent and the last term is \(<1\), while for \(m\ge7\) the
middle term is \(<1\) and the last term is \(<1/3\). Thus \(h_m<0\) on \((\pi/2,R_m]\). Since
\(S_m\) is the first zero of \(h_m\) in \((\pi/2,\pi)\), it follows that \(R_m<S_m\). Combining this
with the negativity on \((0,\pi/2)\) and the definition of \(S_m\) gives \(h_m(R)<0\) for all
\(0<R<S_m\).
\end{proof}

\begin{lemma}[\textbf{Right-neighborhood positivity of \(h_m\) past its first zero}]
\label{lem:positivity-h_m-past-first-zero}
Let \(m\ge4\), and let \(S_m\in(\pi/2,\pi)\) be the first zero of \(h_m\), whose existence was
established in Proposition~\ref{prop:sign_h_m_2_3_and_zero_m_ge_4}. Then
\[
    \boxed{h_m'(S_m)>0.}
\]
Consequently, there exists \(\eta_m \in(0,\pi-S_m)\) such that
\[
    h_m(R)>0\qquad\text{for all }R\in(S_m,S_m+\eta_m).
\]
\end{lemma}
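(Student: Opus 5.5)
The plan is to work with the simplified form \eqref{eqn:h_m_simplified} rather than with \(A_0,A_2,A_4\) directly. Write \(h_m=-K_m\,P_m\), where \(K_m:=\frac{m-1}{12m(m+2)}>0\) and
\[
P_m(R):=3(m-3)Rc^3+(m-7)Rc-3(m-3)c^2+4,\qquad c=\cot R .
\]
Then \(h_m'(S_m)>0\) is equivalent to \(P_m'(S_m)<0\). The consequence follows from the first part alone: \(h_m(S_m)=0\) and \(h_m'(S_m)>0\) give \(h_m>0\) on some \((S_m,S_m+\eta_m)\), and we may shrink \(\eta_m\) so that \(S_m+\eta_m<\pi\).

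\textbf{Step 1 (differentiate).} Using \(c'=-(1+c^2)\), we get
\[
P_m'(R)=c\bigl(3(m-3)c^2+(m-7)\bigr)-(1+c^2)R\bigl(9(m-3)c^2+(m-7)\bigr)+6(m-3)c(1+c^2).
\]
On \((\pi/2,\pi)\) we have \(c<0\) and \(R>0\), so the last term is always negative.

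\textbf{Step 2 (use the zero relation).} At \(R=S_m\), the equation \(P_m(S_m)=0\) gives
\[
-Rc\bigl(3(m-3)c^2+(m-7)\bigr)=4-3(m-3)c^2 .
\]
Lemma~\ref{lem:Rm-Sm-ordering} gives \(S_m>R_m\), and Proposition~\ref{prop:zero_bm} then gives \(b_m(S_m)<0\), that is, \(y:=-S_mc>1/(m-1)\). This bound on \(y\) is the extra information that pins \(c\) down at a zero.

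\textbf{Step 3 (the easy case \(m\ge7\)).} Here \(m-7\ge0\), so every coefficient bracket in Step 1 is positive. Each of the three terms of \(P_m'\) is then nonpositive, and the last one is strictly negative, so \(P_m'(S_m)<0\) with no further work.

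\textbf{Step 4 (the obstacle, \(m=4,5,6\)).} Now \(m-7<0\), so the brackets \(3(m-3)c^2+(m-7)\) and \(9(m-3)c^2+(m-7)\) may change sign, and the sign of \(P_m'\) is no longer automatic.

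What I would try first is to eliminate \(R\) using the zero relation. Solve Step 2 for \(R\) as a rational function of \(c\):
\[
R=\frac{4-3(m-3)c^2}{-c\,\bigl(3(m-3)c^2+m-7\bigr)} .
\]
Substituting this into \(P_m'\) turns the sign question into the sign of an explicit rational function of \(c\) alone. That sign only needs to be checked on the admissible range of \(c\), which is cut out by two conditions:
\begin{itemize}
\item \(R\in(\pi/2,\pi)\), equivalently \(c<0\);
\item the lower bound \(y>1/(m-1)\) from Step 2.
\end{itemize}
The admissible range also forces the factors \(4-3(m-3)c^2\) and \(3(m-3)c^2+m-7\) to have consistent signs, since \(R>0\).

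Should the resulting polynomial inequality in \(c\) not close by hand, the fallback is interval arithmetic for each of the three dimensions \(m=4,5,6\):
\begin{itemize}
\item locate \(S_m\) in a rigorously certified interval, using sign changes of \(P_m\);
\item bound \(P_m'\) away from zero on that interval.
\end{itemize}
This is in the same spirit as the symbolic computations in Appendix~\ref{scn:symbolic-computation}.
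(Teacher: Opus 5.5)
\textbf{Genuine gap: the case $m=4,5,6$ is not proved.} Steps 1--3 are correct and match the paper's argument; your $P_m$ is $-Q_m$ in the paper's notation $x=-\cot R$. Step 4, however, is only a plan. You propose eliminating $R$ and then, failing that, interval arithmetic, but nothing there establishes $P_m'(S_m)<0$. Since $m=4,5,6$ is the only nontrivial case, the lemma is unproved there. The fallback would also be heavier than you suggest. Certifying that the \emph{first} zero lies in a given interval $[\alpha,\beta]$ would require certifying the sign of $P_m$ on all of $(\pi/2,\alpha)$, because Lemma~\ref{lem:Rm-Sm-ordering} only controls $(0,R_m]$.

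\textbf{The missing step.} You were already next to it when you noted that the two factors in the zero relation must have consistent signs because $-S_m c>0$. At $R=S_m$, write $B_1:=3(m-3)c^2+(m-7)$ and $B_2:=9(m-3)c^2+(m-7)$.
\begin{itemize}
\item Suppose $B_1\le 0$. Then $3(m-3)c^2\le 7-m$, so the right-hand side of your Step 2 identity satisfies $4-3(m-3)c^2\ge m-3>0$.
\item The left-hand side $-S_m c\,B_1$ is $\le 0$, which is a contradiction. Hence $B_1>0$.
\item Since $m-7<0$, it follows that $B_2=3B_1-2(m-7)>0$.
\item With both brackets positive, all three terms of $P_m'(S_m)$ in Step 1 are negative, exactly as in Step 3. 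So $P_m'(S_m)<0$ for $m=4,5,6$ as well.
\end{itemize}
This is the paper's argument. It needs no elimination of $R$, no use of the bound $-S_m\cot S_m>1/(m-1)$ coming from $b_m(S_m)<0$, and no interval arithmetic.
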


\begin{proof}
For \(R\in(\pi/2,\pi)\), set \(x:=-\cot R>0\), \(a:=m-3>0\), and \(b:=m-7\). By
\eqref{eqn:h_m_simplified},
\[
    h_m(R)=\frac{m-1}{12m(m+2)}Q_m(R),
    \qquad
    Q_m(R):=3aRx^3+bRx+3ax^2-4.
\]
Since \(x'=1+x^2\),
\[
    Q_m'(R)=x(3ax^2+b)+(1+x^2)\{6ax+R(9ax^2+b)\}.
\]
If \(m\ge7\), then \(b\ge0\), so \(Q_m'(R)>0\) on \((\pi/2,\pi)\). Hence
\(Q_m'(S_m)>0 \implies h_m'(S_m) > 0\).

It remains to treat \(m=4,5,6\), where \(b<0\). Let \(x_m:=-\cot S_m\). Since \(Q_m(S_m)=0\),
we claim that \(3ax_m^2+b>0\). Indeed, if \(3ax_m^2+b\le0\), then
\[
    0=Q_m(S_m)
    =S_mx_m(3ax_m^2+b)+3ax_m^2-4
    \le 3ax_m^2-4
    \le -b-4=3-m<0,
\]
a contradiction. Thus \(3ax_m^2+b>0\). Since \(b<0\), we also have
\(9ax_m^2+b=3(3ax_m^2+b)-2b>0\). Substituting \(R=S_m\) and \(x=x_m\) into the formula for
\(Q_m'\) gives \(Q_m'(S_m)>0\). Therefore \(h_m'(S_m)>0\) for every \(m\ge4\), and the final
right-neighborhood positivity follows by continuity.
\end{proof}


\section{Sharp support thresholds for smeariness under rotational symmetry}
\label{scn:semi-tightness-rot-symm}
\label{scn:hemisphere-obstruction-rot-symm}
\label{scn:construct-smeary-rot-symm}

We now combine the rotationally symmetric derivative formulas with the radial threshold properties of \(b_m,h_m,R_m,S_m\). The Hessian is controlled by \(b_m\) through \eqref{eqn:Hess}; the fourth-order term is controlled by \(h_m\) through \eqref{eqn:fourth-derivative}. Proposition~\ref{prop:zero_bm} gives the unique Hessian threshold \(R_m\in(\pi/2,\pi)\). For \(m\ge4\), Proposition~\ref{prop:sign_h_m_2_3_and_zero_m_ge_4} gives the first zero \(S_m\in(\pi/2,\pi)\) of \(h_m\). Moreover, Lemma~\ref{lem:Rm-Sm-ordering} gives \(R_m<S_m\) and \(h_m<0\) on \((0,S_m)\), while Lemma~\ref{lem:positivity-h_m-past-first-zero} gives \(h_m'(S_m)>0\), hence positivity of \(h_m\) immediately to the right of \(S_m\).
\begin{figure}[htbp]
    \centering
    \includegraphics[width=0.5\textwidth]{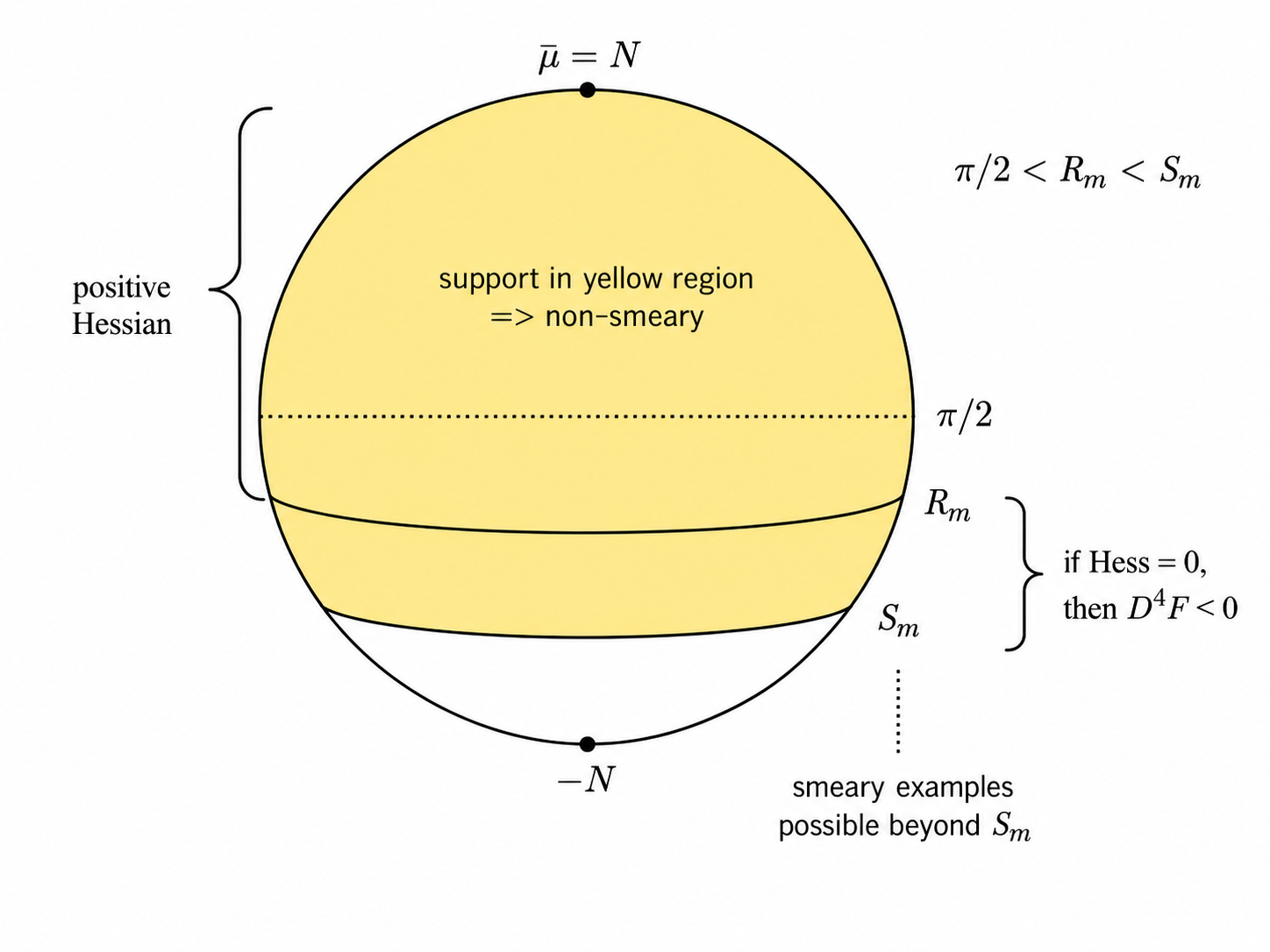}
    \caption{Support-dependent behavior under rotational symmetry. If the radial support remains below the quartic threshold \(S_m\), rotationally symmetric smeariness at \(N\) is ruled out; for every \(m\ge4\), support extending past \(S_m\) allows the construction of rotationally symmetric \(2\)-smeary densities.}
    \label{fig:illustration-support-rot-symm}
\end{figure}

\begin{theorem}[\textbf{Smeary support characterization under rotational symmetry}]
\label{thm:semi-tight-support-rot-symm}
\label{thm:ball-obstruction-rot-symm}
\label{thm:engineering-density-in-thin-annuli}
Let \(\mu\) be an absolutely continuous probability measure on \(\mathbb S^m\), \(m\ge2\), with rotationally symmetric density about its unique local Fréchet mean \(\mubar\). Assume Assumption~\ref{assum:density-assumption}. Then the following hold.

\begin{enumerate}
\item[(i)] If \(m=2\) or \(m=3\), then \(\mu\) is not smeary at $\mubar$. More precisely:
\begin{enumerate}
\item[(a)] if
\[
\operatorname{supp}(\mu)\subset \{x\in \mathbb S^m:d(\mubar,x)\le R_m\},
\]
then \(D^2\widetilde F_\mu(0)\) is strictly positive definite;

\item[(b)] regardless of the radial support,
\[
D^4\widetilde F_\mu(0)[w,w,w,w]<0
\qquad\text{for every }w\neq0.
\]
Consequently, if \(D^2\widetilde F_\mu(0)=0\), then \(0\) is a strict local maximum of \(\widetilde F_\mu\). In particular, \(\mubar\) cannot be a smeary local Fréchet mean.
\end{enumerate}

\item[(ii)] Let \(m\ge4\), and let \(S_m\in(\pi/2,\pi)\) be the first zero of \(h_m\). If
\[
\operatorname{supp}(\mu)\subset \{x\in \mathbb S^m:d(\mubar,x)\le S_m\},
\]
then \(\mu\) is not smeary. More precisely:
\begin{enumerate}
\item[(a)] if
\[
\operatorname{supp}(\mu)\subset \{x\in \mathbb S^m:d(\mubar,x)\le R_m\},
\]
then \(D^2\widetilde F_\mu(0)\) is strictly positive definite;

\item[(b)] if
\[
\operatorname{supp}(\mu)\subset \{x\in \mathbb S^m:d(\mubar,x)\le S_m\}
\]
and \(D^2\widetilde F_\mu(0)=0\), then
\[
D^4\widetilde F_\mu(0)[w,w,w,w]<0
\qquad\text{for every }w\neq0.
\]
Thus \(0\) is a strict local maximum of \(\widetilde F_\mu\), and \(\mubar\) cannot be a smeary local Fréchet mean.
\end{enumerate}

\item[(iii)] Conversely, let \(m\ge4\). For every \(\mubar\in\mathbb S^m\) and every \(\varepsilon>0\) with \(S_m+\varepsilon<\pi\), there exists an absolutely continuous probability measure \(\nu\) on \(\mathbb S^m\), rotationally symmetric about \(\mubar\), with continuous radial density satisfying Assumption~\ref{assum:density-assumption}, such that
\[
\operatorname{supp}(\nu)\subset B(\mubar;S_m+\varepsilon),
\]
and
\[
\nu\bigl(\{x\in\mathbb S^m:S_m<d(\mubar,x)<S_m+\varepsilon\}\bigr)>0.
\]
Moreover,
\[
D^2\widetilde F_\nu(0)=0,
\qquad
D^4\widetilde F_\nu(0)[w,w,w,w]>0
\quad\text{for every }w\neq0.
\]
In particular, \(\mubar\) is a \(2\)-smeary local Fréchet mean of \(\nu\).

Consequently, since \(S_m\to\pi/2\) as \(m\to\infty\), for every \(\varepsilon_0>0\) and all sufficiently large \(m\), there exists an absolutely continuous rotationally symmetric \(2\)-smeary probability measure on \(\mathbb S^m\), satisfying Assumption~\ref{assum:density-assumption}, whose support is contained in \(B(\mubar;\pi/2+\varepsilon_0)\).
\end{enumerate}
\end{theorem}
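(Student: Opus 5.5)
The plan is to reduce everything to the two scalar radial integrals
\[
\mathcal B(\rho):=\int_0^\pi \rho(R)(\sin R)^{m-1}b_m(R)\,dR,\qquad
\mathcal H(\rho):=\int_0^\pi \rho(R)(\sin R)^{m-1}h_m(R)\,dR,
\]
via \eqref{eqn:Hess} and \eqref{eqn:fourth-derivative}. By Proposition~\ref{prop:Hessian-common}, \(H\) is a positive multiple of \(\mathcal B(\rho) I_m\). By Proposition~\ref{prop:fourth-common}, \(D^4\widetilde F_\mu(0)[w,w,w,w]\) is a positive multiple of \(\mathcal H(\rho)\|w\|^4\). Under rotational symmetry the third derivative at \(0\) should vanish by oddness: \(w\mapsto -w\) is induced by an isometry fixing \(N\) that preserves \(\mu\), so \(\widetilde F_\mu\) is even. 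Hence smeariness is equivalent to \(\mathcal B(\rho)=0\), and the local behaviour is then decided by the sign of \(\mathcal H(\rho)\).

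For (i)(a) and (ii)(a), support in the ball of radius \(R_m\) means \(\rho=0\) a.e.\ on \((R_m,\pi)\). Since \(b_m>0\) on \((0,R_m)\) by Proposition~\ref{prop:zero_bm}, and \(\rho\) has positive mass on \((0,R_m)\) (the point \(R_m\) is null), we get \(\mathcal B(\rho)>0\), so \(H\) is positive definite. For (i)(b), Proposition~\ref{prop:sign_h_m_2_3_and_zero_m_ge_4}(i),(ii) gives \(h_m<0\) on all of \((0,\pi)\), so \(\mathcal H(\rho)<0\) for any nonzero \(\rho\). For (ii)(b), Lemma~\ref{lem:Rm-Sm-ordering} gives \(h_m<0\) on \((0,S_m)\), and support in \([0,S_m]\) yields \(\mathcal H(\rho)<0\). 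In both cases, if \(H=0\), the Taylor expansion \(\widetilde F_\mu(u)=\widetilde F_\mu(0)+\frac1{24}D^4\widetilde F_\mu(0)[u,u,u,u]+O(\|u\|^5)\), with the third-order term vanishing by evenness, shows that \(0\) is a strict local maximum. That contradicts \(\mubar\) being a local minimum. If instead \(H\neq0\), then \(H\) is a nonzero multiple of the identity, hence nonsingular, so \(\mu\) is not smeary.

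For (iii), I will build \(\rho=\rho_1+\lambda\rho_2\) with continuous nonnegative bumps:
\begin{itemize}
\item \(\rho_1\) supported in \((0,\delta)\), a small interval where \(b_m>0\);
\item \(\rho_2\) supported in \((S_m,S_m+\eta)\), with \(\eta<\min(\varepsilon,\eta_m)\) from Lemma~\ref{lem:positivity-h_m-past-first-zero}.
\end{itemize}
On that outer interval \(b_m<0\) (since \(R_m<S_m\)) and \(h_m>0\). Choose \(\lambda>0\) so that \(\mathcal B(\rho)=0\), which is possible because the two contributions have opposite signs. Then \(\mathcal H(\rho)=\mathcal H(\rho_1)+\lambda\mathcal H(\rho_2)\). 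The first term is negative but can be made small relative to \(\mathcal B(\rho_1)\): near \(0\), \(b_m\to m\) while \(h_m(0^+)\) is finite. Computing it from \eqref{eqn:h_m_simplified} should give a bounded limit.

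The key ratio is then
\[
\mathcal H(\rho)=\mathcal B(\rho_1)\Bigl(\frac{\mathcal H(\rho_1)}{\mathcal B(\rho_1)}+\frac{\mathcal H(\rho_2)}{|\mathcal B(\rho_2)|}\Bigr).
\]
The main obstacle is making this positive. The first ratio tends to \(h_m(0^+)/m<0\), a fixed negative number, while the second is the weighted average of \(h_m/|b_m|\) over \((S_m,S_m+\eta)\). That average is small if \(\eta\) is small, since \(h_m(S_m)=0\).

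So I will instead place \(\rho_2\) further out, anywhere in \((S_m,S_m+\varepsilon)\) where \(h_m\) is large. If that is still insufficient, I will let \(\rho_2\) use radii where \(h_m/|b_m|\) is as large as possible, and move \(\rho_1\) to radii where \(|h_m|/b_m\) is minimal. Near \(R=0\), \(h_m\) stays bounded while \(b_m\approx m\), so I need to check whether \(\sup_{(S_m,S_m+\varepsilon)} h_m/|b_m|\) exceeds \(\inf_{(0,R_m)}|h_m|/b_m\). I expect that verifying this inequality for every \(\varepsilon\), possibly by exploiting \(h_m'(S_m)>0\) against a small-\(\varepsilon\) expansion, is the genuine difficulty. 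If it fails, I will need to relocate the inner mass where \(h_m/b_m\to0\).

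It remains to confirm that \(N\) is an isolated local minimum. This follows from \(H=0\), the vanishing third derivative, and \(D^4>0\), since these give quartic growth. Finally, rotating the whole construction moves \(N\) to any prescribed \(\mubar\).
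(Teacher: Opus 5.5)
Your treatment of (i) and (ii) is correct and is the paper's argument: positivity of $\mathcal B(\rho)$ for support in $[0,R_m]$, the sign of $h_m$ for $\mathcal H(\rho)$, and evenness of $\widetilde F_\mu$ to remove the cubic term. The gap is in (iii), at the step you yourself call the main obstacle. You never show that $\mathcal H(\rho)>0$ can be achieved for every $\varepsilon$, and your analysis of that step rests on a false premise. The limit $h_m(0^+)$ is not a nonzero negative number; it is $0$. Substitute $R\cot R=1-R^2/3+O(R^4)$ and $\cot^2R=R^{-2}-2/3+O(R^2)$ into \eqref{eqn:h_m_simplified}. The singular and constant terms in the braces cancel (the constants give $-(m-3)+(m-7)+4=0$), and one gets $h_m(R)=-\frac{m-1}{45m}R^2+O(R^4)$. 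For $m=3$ this agrees with $h_3(R)=\frac{2}{45}(R\cot R-1)$. This is also expected geometrically: $g(u;0)=\|u\|^2$ exactly, so mass near the Fr\'echet mean produces essentially no quartic term. Consequently $|h_m(R)|/b_m(R)\to0$ as $R\downarrow0$. In other words, $\mathcal H(\rho_1)/\mathcal B(\rho_1)\to0$ as the inner bump shrinks toward $N$, and the infimum you wanted to compare against is $0$.

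That is the missing idea, and it dictates the order of choices. First fix the outer bump. Pick any $R_+\in(S_m,S_m+\min\{\eta_m,\varepsilon\})$, so that $b_m(R_+)<0<h_m(R_+)$ and $q_+:=h_m(R_+)/(-b_m(R_+))>0$. Concentrate $\rho_2$ near $R_+$ so that $\mathcal H(\rho_2)/|\mathcal B(\rho_2)|>q_+/2$; it does not matter how small $q_+$ is. Only then choose $\delta$ so small that $|h_m|/b_m<q_+/4$ on $(0,\delta)$, and put $\rho_1$ there. Your ratio identity then gives $\mathcal H(\rho)>\mathcal B(\rho_1)q_+/4>0$.

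Your fallback of pushing $\rho_2$ outward to where $h_m$ is large does not rescue the argument. Positivity of $h_m$ is only guaranteed on $(S_m,S_m+\eta_m)$, and $\sup_{(S_m,S_m+\varepsilon)}h_m/|b_m|\to0$ as $\varepsilon\to0$ because $h_m(S_m)=0$. Moreover, $h_m<0$ on all of $(0,R_m)$, so the only way to make the inner ratio small is to let $R\to0$. Had that ratio really been bounded away from $0$, the construction would fail precisely in the small-$\varepsilon$ regime that makes $S_m$ sharp. You should also record that the final sentence of (iii) follows from (iii) together with $S_m\to\pi/2$ (Proposition~\ref{prop:asymptotics-Rm-Sm}).
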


\begin{proof}
By applying an isometry, we may assume throughout the proof that \(\mubar=N\). We write points away from the cut locus of \(N\) in polar normal coordinates as \(x=\exp_N(R\Theta)\), where \(R=d(N,x)\in(0,\pi)\) and \(\Theta\in\mathbb S^{m-1}\subset T_N\mathbb S^m\). Let \(\rho(R)\) denote the radial density.

\emph{Proof of (i)(a) and (ii)(a): the Hessian obstruction.}
By the rotationally symmetric Hessian formula \eqref{eqn:Hess},
\[
D^2\widetilde F_\mu(0)
=
\frac{2|\mathbb S^{m-1}|}{m}
\left(\int_0^\pi \rho(R)(\sin R)^{m-1}b_m(R)\,dR\right)I_m .
\]
Thus the Hessian is a scalar multiple of the identity. If
\[
\operatorname{supp}(\mu)\subset \{x\in\mathbb S^m:d(N,x)\le R_m\},
\]
then the radial density is supported in \([0,R_m]\). Proposition~\ref{prop:zero_bm} gives \(b_m(R)>0\) on \((0,R_m)\) and \(b_m(R_m)=0\). Since \(\mu\) is a probability measure with density, it cannot be supported only on the single geodesic sphere \(\{R=R_m\}\). Hence
\[
\int_0^\pi \rho(R)(\sin R)^{m-1}b_m(R)\,dR
=
\int_0^{R_m}\rho(R)(\sin R)^{m-1}b_m(R)\,dR>0.
\]
Therefore \(D^2\widetilde F_\mu(0)\) is strictly positive definite. This proves parts (i)(a) and (ii)(a). In this support regime the lifted Frechet function has nondegenerate quadratic growth at \(0\), so the Frechet mean is non-smeary.

\emph{Proof of (i)(b): the low-dimensional quartic obstruction.}
Assume \(m=2\) or \(m=3\). By Proposition~\ref{prop:sign_h_m_2_3_and_zero_m_ge_4}, \(h_m(R)<0\) for every \(R\in(0,\pi)\). Using the rotationally symmetric fourth-order formula \eqref{eqn:fourth-derivative}, we get
\[
D^4\widetilde F_\mu(0)[w,w,w,w]
=
24|\mathbb S^{m-1}|
\left(\int_0^\pi \rho(R)(\sin R)^{m-1}h_m(R)\,dR\right)\|w\|^4 .
\]
The integral is strictly negative, because \(h_m<0\) on \((0,\pi)\) and \(\mu\) is an absolutely continuous probability measure. Hence
\[
D^4\widetilde F_\mu(0)[w,w,w,w]<0
\qquad\text{for every }w\neq0.
\]

It remains to explain why this rules out smeariness. Since \(D^2\widetilde F_\mu(0)\) is a scalar multiple of \(I_m\) by \eqref{eqn:Hess}, and since \(0\) is a local minimum of \(\widetilde F_\mu\), this scalar is nonnegative. If it is strictly positive, then the mean is non-smeary. If \(D^2\widetilde F_\mu(0)=0\), then rotational symmetry removes the cubic term: the lifted Frechet function is \(O(m)\)-invariant, hence its Taylor expansion at \(0\) contains no nonzero homogeneous term of odd degree. Therefore
\[
\widetilde F_\mu(u)-\widetilde F_\mu(0)
=
\frac{1}{24}D^4\widetilde F_\mu(0)[u,u,u,u]+o(\|u\|^4).
\]
The quartic term is strictly negative for \(u\neq0\), so \(0\) is a strict local maximum of \(\widetilde F_\mu\), not a smeary local minimum. This proves part (i).

\emph{Proof of (ii)(b): the quartic obstruction up to \(S_m\).}
Let \(m\ge4\), and assume
\[
\operatorname{supp}(\mu)\subset \{x\in\mathbb S^m:d(N,x)\le S_m\}.
\]
As above, \(D^2\widetilde F_\mu(0)\) is a scalar multiple of the identity. Since \(0\) is a local minimum, this scalar is nonnegative. If it is strictly positive, then \(\mu\) is non-smeary. It remains to consider the degenerate Hessian case \(D^2\widetilde F_\mu(0)=0\).

By Lemma~\ref{lem:Rm-Sm-ordering}, \(h_m(R)<0\) for all \(0<R<S_m\), while \(h_m(S_m)=0\). Since \(\mu\) has a density, it cannot be supported only on the single geodesic sphere \(\{R=S_m\}\). Hence
\[
\int_0^\pi \rho(R)(\sin R)^{m-1}h_m(R)\,dR
=
\int_0^{S_m}\rho(R)(\sin R)^{m-1}h_m(R)\,dR<0.
\]
By \eqref{eqn:fourth-derivative},
\[
D^4\widetilde F_\mu(0)[w,w,w,w]<0
\qquad\text{for every }w\neq0.
\]
Again the cubic term vanishes by rotational symmetry, and therefore
\[
\widetilde F_\mu(u)-\widetilde F_\mu(0)
=
\frac{1}{24}D^4\widetilde F_\mu(0)[u,u,u,u]+o(\|u\|^4).
\]
Thus, in the only possible Hessian-degenerate case, \(0\) is a strict local maximum of \(\widetilde F_\mu\). Hence \(\mubar=N\) cannot be a smeary local Fréchet mean. This proves part (ii).

\emph{Proof of (iii): construction of a rotationally symmetric \(2\)-smeary density past \(S_m\).}
Fix \(m\ge4\) and \(\varepsilon>0\) with \(S_m+\varepsilon<\pi\). By Lemma~\ref{lem:positivity-h_m-past-first-zero}, choose \(\eta_m\in(0,\pi-S_m)\) such that
\[
h_m(R)>0
\qquad\text{for all }R\in(S_m,S_m+\eta_m).
\]
Set \(\eta_\varepsilon:=\min\{\eta_m,\varepsilon\}\), and choose
\[
R_+\in(S_m,S_m+\eta_\varepsilon).
\]
Since \(R_m<S_m<R_+\) by Lemma~\ref{lem:Rm-Sm-ordering}, and since \(b_m\) is strictly decreasing with unique zero \(R_m\) by Proposition~\ref{prop:zero_bm}, we have \(b_m(R_+)<0\). Also \(h_m(R_+)>0\). Define
\[
q_+:=\frac{h_m(R_+)}{-b_m(R_+)}>0.
\]

By continuity, choose a nonnegative outer bump function
\[
\phi_+\in C_c((S_m,S_m+\eta_\varepsilon)),
\qquad
\phi_+\not\equiv0,
\]
with support sufficiently concentrated near \(R_+\) so that, with
\[
I_+:=\int_{S_m}^{S_m+\eta_\varepsilon}\phi_+(R)b_m(R)\,dR,
\qquad
J_+:=\int_{S_m}^{S_m+\eta_\varepsilon}\phi_+(R)h_m(R)\,dR,
\]
we have
\[
I_+<0,\qquad J_+>0,\qquad \frac{J_+}{-I_+}>\frac{q_+}{2}.
\]
This is possible because the ratio \(J_+/(-I_+)\) approaches \(h_m(R_+)/(-b_m(R_+))=q_+\) as the support of \(\phi_+\) is concentrated near \(R_+\).

We next choose the inner bump near \(N\). Since
\[
b_m(R)=1+(m-1)R\cot R\longrightarrow m>0
\qquad\text{as }R\downarrow0,
\]
and since \(h_m(R)\to0\) as \(R\downarrow0\), we have \(|h_m(R)|/b_m(R)\to0\). Therefore, after choosing \(\delta>0\) sufficiently small, we have \(b_m(R)>0\) and
\[
\frac{|h_m(R)|}{b_m(R)}<\frac{q_+}{4}
\qquad\text{for all }R\in(0,\delta).
\]
Choose a nonnegative bump function
\[
\phi_-\in C_c((0,\delta)),
\qquad
\phi_-\not\equiv0.
\]
Define
\[
I_-:=\int_0^\delta \phi_-(R)b_m(R)\,dR>0,
\qquad
J_-:=\int_0^\delta \phi_-(R)h_m(R)\,dR.
\]
The choice of \(\delta\) gives
\[
\frac{|J_-|}{I_-}<\frac{q_+}{4}.
\]

Now define
\[
\psi_m(R):=a\,\phi_-(R)+c\,\phi_+(R),
\qquad a,c>0,
\]
and choose the ratio of the coefficients by
\[
\frac{c}{a}:=\frac{I_-}{-I_+}.
\]
Then
\begin{equation}
\begin{aligned}
\int_0^\pi \psi_m(R)b_m(R)\,dR
&=
a\int_0^\delta \phi_-(R)b_m(R)\,dR
+
c\int_{S_m}^{S_m+\eta_\varepsilon}\phi_+(R)b_m(R)\,dR \\
&=
aI_-+cI_+
=
0.
\end{aligned}
\end{equation}
Moreover,
\begin{align}
\int_0^\pi \psi_m(R)h_m(R)\,dR
&=
a\int_0^\delta \phi_-(R)h_m(R)\,dR
+
c\int_{S_m}^{S_m+\eta_\varepsilon}\phi_+(R)h_m(R)\,dR \notag\\
&=
aJ_-+cJ_+ \notag\\
&=
aI_-\left(\frac{J_-}{I_-}+\frac{J_+}{-I_+}\right).
\end{align}
Using the two ratio estimates,
\[
\frac{J_-}{I_-}+\frac{J_+}{-I_+}
>
-\frac{q_+}{4}+\frac{q_+}{2}
=
\frac{q_+}{4}>0.
\]
Thus
\[
\int_0^\pi \psi_m(R)h_m(R)\,dR>0.
\]

We now turn \(\psi_m\) into a radial density. Define
\[
\rho(R):=
C_m\,\frac{\psi_m(R)}{(\sin R)^{m-1}},
\]
where \(C_m>0\) is chosen so that
\[
|\mathbb S^{m-1}|
\int_0^\pi \rho(R)(\sin R)^{m-1}\,dR
=
1.
\]
Since \(\phi_-\) and \(\phi_+\) are compactly supported in \((0,\pi)\), the function \(\rho\) is continuous and vanishes in a neighborhood of the cut locus. Hence \(\rho\) defines an absolutely continuous rotationally symmetric probability measure \(\nu\) satisfying Assumption~\ref{assum:density-assumption}. Its radial support is contained in
\[
(0,\delta)\cup(S_m,S_m+\eta_\varepsilon)
\subset(0,S_m+\varepsilon),
\]
and therefore
\[
\operatorname{supp}(\nu)\subset B(N;S_m+\varepsilon).
\]
Moreover, since \(\phi_+\not\equiv0\),
\[
\nu\bigl(\{x\in\mathbb S^m:S_m<d(N,x)<S_m+\varepsilon\}\bigr)>0.
\]

Finally, we verify the derivative conditions. By \eqref{eqn:Hess},
\[
D^2\widetilde F_\nu(0)
=
\frac{2|\mathbb S^{m-1}|}{m}
\left(
\int_0^\pi \rho(R)(\sin R)^{m-1}b_m(R)\,dR
\right)I_m.
\]
Since \(\rho(R)(\sin R)^{m-1}=C_m\psi_m(R)\), the integral in parentheses is zero by the construction of \(\psi_m\). Thus \(D^2\widetilde F_\nu(0)=0\).

Similarly, by \eqref{eqn:fourth-derivative},
\[
D^4\widetilde F_\nu(0)[w,w,w,w]
=
24|\mathbb S^{m-1}|
\left(
\int_0^\pi \rho(R)(\sin R)^{m-1}h_m(R)\,dR
\right)
\|w\|^4.
\]
The scalar integral equals
\[
C_m\int_0^\pi\psi_m(R)h_m(R)\,dR,
\]
which is strictly positive by the construction above. Hence
\[
D^4\widetilde F_\nu(0)[w,w,w,w]>0
\qquad\text{for every }w\neq0.
\]
Rotational symmetry gives \(D\widetilde F_\nu(0)=0\) and removes all odd-order terms in the Taylor expansion at \(0\). Therefore
\[
\widetilde F_\nu(u)-\widetilde F_\nu(0)
=
\frac{1}{24}D^4\widetilde F_\nu(0)[u,u,u,u]+o(\|u\|^4).
\]
Thus \(\widetilde F_\nu(u)>\widetilde F_\nu(0)\) for all sufficiently small \(u\neq0\), and \(N\) is a \(2\)-smeary local Fréchet mean of \(\nu\). Finally, applying an isometry sending \(N\) to the prescribed center \(\mubar\) transports the construction to a rotationally symmetric density about \(\mubar\), preserving absolute continuity, the derivative conditions, and the support-radius statements. This proves part (iii).
\end{proof}

\begin{remark}[\textbf{Low-dimensional obstruction and connection to the result in \citet{Tran2021CLT}}]
The non-smeariness statement for dimensions \(2,3\) is consistent with \citet[Corollary~2.7]{Tran2021CLT}. Under the assumption that the density vanishes in a neighborhood of the cut locus, Tran proved that for \(m=2,3\), if the Hessian vanishes at a rotationally symmetric Frechet mean, then that point is a local maximum, and hence no smeary effect can occur. The proof here uses the explicit radial quartic coefficient \(h_m\): Proposition~\ref{prop:sign_h_m_2_3_and_zero_m_ge_4} gives \(h_2<0\) and \(h_3<0\) pointwise on \((0,\pi)\).
\end{remark}

\begin{remark}[\textbf{Comparison with the high-dimensional construction}]
Theorem~\ref{thm:semi-tight-support-rot-symm} is a support-threshold statement relative to the intrinsic radius \(S_m\): once \(m\ge4\) is \textit{fixed}, the construction works with support contained in \(B(\mubar;S_m+\varepsilon)\) for every \(\varepsilon>0\) with \(S_m+\varepsilon<\pi\). Thus the dimension is no longer tied to the\textit{ support leakage parameter} \(\varepsilon\) in this intrinsic-threshold formulation.

This should be distinguished from \textit{hemispherical formulations}, such as \citet[Theorem~3.3(i)]{Eltzner2022GeometricalSmeariness}, where the support is required to lie within a radius of the form \(\pi/2+O(1/m)\). To force the support into a prescribed ball \(B(\mubar;\pi/2+\delta)\), one must then use the asymptotic behavior \(S_m \to\pi/2\) as \(m\to\infty\), see Proposition \ref{prop:asymptotics-Rm-Sm}. This is precisely the final consequence in part~(iii).
\end{remark}

\section{Sharp support threshold for general densities}
\label{scn:general-density-support-threshold}

We now turn to absolutely continuous probability measures with densities not assumed to be rotationally symmetric. The setup and the derivative formulas for the Fréchet function have already been established in Section~\ref{scn:common-setup-derivatives}. In particular, we use the general Hessian formula \eqref{eqn:Hessian-general-density}, the bilinear form \eqref{eqn:Hessian-general-bilinear-form}, the fourth derivative formula \eqref{eqn:fourth-derivative-general-density}, and the averaged fourth derivative identity \eqref{eqn:averaged-fourth-derivative-general-density}. The theorem below collects the corresponding support obstructions and the sharp directional construction. In the theorem below, we apply an isometry to assume $\mubar:=N,$ the North pole without loss of generality.

\begin{theorem}[\textbf{Support threshold for general densities}]
\label{thm:general-density-support-threshold}
\label{thm:hemisphre-obstruction-general}
\label{thm:directionally-smeary-rank-one}
\label{prop:rank1_psd_hessian_ball_pihalf_eps}
\label{prop:fourth_derivative_positive_definite}
Let \(\mu\) be an absolutely continuous probability measure on \(\mathbb S^m\), \(m\ge2\), satisfying Assumption~\ref{assum:density-assumption}, and suppose that \(N\) is a unique local Fréchet mean of \(\mu\). Let \(\widetilde F_\mu\) be the lifted Fréchet function in normal coordinates at \(N\). Then the following hold.

\begin{enumerate}
\item[(i)] If
\[
\operatorname{supp}(\mu)\subset \overline B_g(N;\pi/2),
\]
then \(D^2\widetilde F_\mu(0)\) is positive definite. Hence neither full nor directional smeariness can occur at \(N\).

\item[(ii)] Let \(R_m\in(\pi/2,\pi)\) be the unique zero of \(b_m\) (cf. Equation \eqref{eqn:R_m}). If
\[
\operatorname{supp}(\mu)\subset \overline B_g(N;R_m),
\]
then \(D^2\widetilde F_\mu(0)\neq0\). Hence full smeariness is impossible.

\item[(iii)] Conversely, for every \(m\ge2\) and every \(\varepsilon>0\), there exists an absolutely continuous probability measure \(\nu\) on \(\mathbb S^m\), satisfying Assumption~\ref{assum:density-assumption}, such that
\[
\operatorname{supp}(\nu)\subset B_g(N;\pi/2+\varepsilon),
\]
\(N\) is a local Fréchet mean of \(\nu\), \(D^2\widetilde F_\nu(0)\) is positive semidefinite of rank one, and \(\nu\) is directionally \(2\)-smeary at \(N\). More precisely, after choosing an orthonormal basis \(e_1,\ldots,e_m\) of \(T_N\mathbb S^m\), one may arrange that
\[
\ker D^2\widetilde F_\nu(0)=e_1^\perp,
\qquad
D^3\widetilde F_\nu(0)=0,
\]
and
\[
D^4\widetilde F_\nu(0)[w,w,w,w]>0
\qquad\text{for every }w\in e_1^\perp\setminus\{0\}.
\]
\item[(iv)] Let \(m\ge4\), and let \(S_m\in(\pi/2,\pi)\) be the first zero of \(h_m\) (cf. Equation \eqref{eqn:S_m}). If
\[
\operatorname{supp}(\mu)\subset \overline B_g(N;S_m),
\]
then full smeariness at \(N\) is impossible. More precisely, if
\(D^2\widetilde F_\mu(0)=0\), then there exists \(w\in T_N\mathbb S^m\setminus\{0\}\) such that
\[
D^4\widetilde F_\mu(0)[w,w,w,w]<0.
\]
Consequently, \(0\) cannot be a local minimum of \(\widetilde F_\mu\), and hence \(N\) cannot be a fully smeary local Fréchet mean.

\item[(v)] Conversely, let \(m\ge4\). For every \(\varepsilon>0\) with \(S_m+\varepsilon<\pi\), there exists an absolutely continuous probability measure \(\nu\) on \(\mathbb S^m\), satisfying Assumption~\ref{assum:density-assumption}, such that
\[
\operatorname{supp}(\nu)\subset B_g(N;S_m+\varepsilon),
\]
and \(N\) is a fully \(2\)-smeary local Fréchet mean of \(\nu\). Moreover, \(\nu\) can be chosen rotationally symmetric about \(N\).
\end{enumerate}
\end{theorem}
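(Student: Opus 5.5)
\emph{Parts (i), (ii), (iv), (v).} Parts (i), (ii), (iv) follow from the explicit formulas of Section~\ref{scn:common-setup-derivatives}; (v) is just Theorem~\ref{thm:semi-tight-support-rot-symm}(iii) read inside the class of general densities; (iii) needs a genuine construction.

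For (i), I rewrite the integrand of \eqref{eqn:Hessian-general-bilinear-form} as
\[
R\cot R\bigl(\|w\|^2-\langle w,\Theta\rangle^2\bigr)+\langle w,\Theta\rangle^2 .
\]
For \(R\in(0,\pi/2]\) both summands are nonnegative, since \(R\cot R\ge0\) and \(|\langle w,\Theta\rangle|\le\|w\|\). Hence \(w^{\mathsf T}Hw\ge 2\int\langle w,\Theta\rangle^2\rho(\sin R)^{m-1}\). This is strictly positive for \(w\neq0\), because an absolutely continuous measure gives zero mass to the great sphere \(\{\langle w,\Theta\rangle=0\}\).

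For (ii), I take the trace of \eqref{eqn:Hessian-general-density}. Since \(\operatorname{tr}(\Theta\Theta^{\mathsf T})=1\), this gives
\[
\operatorname{tr}H=2\int\!\!\int b_m(R)\,\rho(R,\Theta)(\sin R)^{m-1}\,d\Theta\,dR .
\]
By Proposition~\ref{prop:zero_bm}, \(b_m>0\) on \((0,R_m)\), and \(\{R=R_m\}\) is a null set, so the trace is strictly positive and \(H\neq0\).

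For (iv), assume \(H=0\). Then \eqref{eqn:averaged-fourth-derivative-general-density} combined with Lemma~\ref{lem:Rm-Sm-ordering} (\(h_m<0\) on \((0,S_m)\)) makes the spherical average of \(D^4\widetilde F_\mu(0)[w,w,w,w]\) strictly negative, so some unit \(w\) has a negative value. Along \(t\mapsto\widetilde F_\mu(tw)\), the gradient and the Hessian vanish. If the cubic coefficient is nonzero, the function changes sign at \(t=0\). Otherwise the quartic coefficient is negative. In either case \(0\) is not a local minimum.

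\emph{Part (iii): the construction.} I will build \(\nu\) as a mixture of two pieces:
\begin{itemize}
\item an inner bump of mass \(a\), concentrated in a tiny ball around \(N\) and rotationally symmetric;
\item an outer piece of mass \(c\), supported in the annulus \(R\in(\pi/2,\pi/2+\varepsilon)\), with directions \(\Theta\) confined to small caps around \(\pm e_1\).
\end{itemize}
I impose invariance under each coordinate reflection \(w_i\mapsto-w_i\) and under \(O(m-1)\) acting on \(e_1^\perp\). Then \(H=\alpha\,e_1e_1^{\mathsf T}+\beta\,P_{e_1^\perp}\), and all odd derivatives at \(0\) vanish (in particular \(D\widetilde F_\nu(0)=0\) and \(D^3\widetilde F_\nu(0)=0\)). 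The contributions to \(\beta\) are as follows.
\begin{itemize}
\item The inner piece contributes about \(2a\) to \(\beta\).
\item The outer piece contributes \(2c\,\mathbb E[R\cot R]+O(c\cdot\text{cap size})\), which is negative because \(R\cot R<0\) for \(R>\pi/2\).
\end{itemize}
Tuning \(c/a\) continuously therefore gives \(\beta=0\). Meanwhile \(\alpha\approx 2a+2c(1-\text{small})>0\), so \(H\) has rank one with kernel \(e_1^\perp\).

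Local minimality then follows from a weighted-homogeneity argument. Write \(u=(x,y)\) with \(x\in\mathbb R e_1\), \(y\in e_1^\perp\). Reflection symmetry kills every term odd in \(x\) or in \(y\), so the expansion reads \(\alpha x^2+Q_4(y)+O(x^2\|y\|^2+x^4+\|y\|^5)\). This is positive near \(0\) once \(Q_4>0\) on \(e_1^\perp\setminus\{0\}\).

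\emph{Main obstacle: positivity of \(Q_4\) on \(e_1^\perp\).} For \(w\perp e_1\) and \(\Theta\approx\pm e_1\), formula \eqref{eqn:fourth-derivative-general-density} is dominated by the term \(24A_0(R)\|w\|^4\). The inner piece's quartic contribution tends to \(0\) as its radius shrinks, while its Hessian contribution stays of order \(a\). So the sign is governed by \(A_0\) just beyond \(\pi/2\), plus error terms controlled by the cap size.

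A direct check from \eqref{eqn:A-B} gives \(A_0(\pi/2)=0\). So I would first compute \(A_0'(\pi/2)\) and hope it is positive, which would make \(A_0>0\) on \((\pi/2,\pi/2+\eta)\). I would then take the outer radii in that window, shrink the caps, and shrink the inner radius, in that order, so that the errors are dominated.

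If instead \(A_0<0\) there, my fallback is to spread the outer directions over a wider band around \(\pm e_1\). That makes the \(A_2R^2\|w\|^2\langle w,\Theta\rangle^2\) and \(A_4R^4\langle w,\Theta\rangle^4\) terms contribute, and I would look for a band profile that makes the combined quartic coefficient positive while the Hessian balancing above still goes through.
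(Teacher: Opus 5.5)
Your arguments for (i), (ii), (iv) and (v) are essentially the paper's. In (i) you bound $w^{\mathsf T}Hw$ below by $2\int\langle w,\Theta\rangle^2\,d\mu$ rather than by the $R\cot R\,\|w\|^2$ part on $\{R<\pi/2\}$; this is an equally valid variant.

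Part (iii) is correct but uses a genuinely different construction. The paper takes a product density $g_p(R)\varphi_\kappa(\Theta)$ with $\varphi_\kappa\propto e^{\kappa\Theta_1^2}$. All radial mass sits in two thin shells on either side of the equator, and $\mu_\perp(p)=0$ is obtained by the intermediate value theorem. Its positive-Hessian shell lies just below $\pi/2$, where $A_0<0$, so the paper needs the second-order relation $A_0+\tfrac1{12}R\cot R=\tfrac14(R-\tfrac{\pi}{2})^2+O((R-\tfrac{\pi}{2})^3)$. The Hessian balance forces $\int R\cot R\,g_{p^\ast}w_m\approx 0$, which kills the first-order part of $\int A_0\,g_{p^\ast}w_m$, and the quadratic term then wins.

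You instead put the positive Hessian contribution in a rotationally symmetric bump near $N$, where $b_m\to m$ while $h_m=O(R^2)$. This mirrors the paper's proof of Theorem~\ref{thm:semi-tight-support-rot-symm}(iii), and it means only the sign of $A_0$ beyond $\pi/2$ matters. Your route is more uniform with the rotational case. It also spells out the local-minimum step (reflection parity plus weighted homogeneity), which the paper only asserts. The paper's route gives the closed-form Hessian $2\{s(p)I_m+(1-s(p))\Sigma_\kappa\}$ and an everywhere positive angular density.

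The one step you leave as a hope does hold, so your fallback is unnecessary. Restrict to $\alpha=0$, i.e.\ $\cos d=\cos t\cos R$, and expand $\arccos^2$ to order $t^4$. This gives
\[
A_0(R)=-\tfrac1{12}R\cot R+\tfrac14\cot^2R\,\bigl(1-R\cot R\bigr),
\]
which agrees with \eqref{eqn:A-B}. Hence $A_0>0$ on all of $(\pi/2,\pi)$, and $A_0(\pi/2+t)=\tfrac{\pi}{24}t+\tfrac13t^2+O(t^3)$, as the paper records. Your outer radial window can therefore be any compact subinterval of $(\pi/2,\min\{\pi/2+\varepsilon,\pi\})$. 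With the outer radii fixed first, then the caps shrunk, then the inner radius shrunk, your error bookkeeping goes through as stated.
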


\begin{proof}
Write points away from the cut locus of \(N\) as \(x=\exp_N(R\Theta)\), where \(R\in(0,\pi)\) and \(\Theta\in\mathbb S^{m-1}\subset T_N\mathbb S^m\). Let \(H:=D^2\widetilde F_\mu(0)\).

\emph{Proof of (i).}
Let \(w\in T_N\mathbb S^m\). By \eqref{eqn:Hessian-general-bilinear-form},
\[
w^\top H w
=
2\int_0^\pi\!\!\int_{\mathbb S^{m-1}}
\left[
\frac{R}{\tan R}\|w\|^2+
\left(1-\frac{R}{\tan R}\right)\langle w,\Theta\rangle^2
\right]\rho(R,\Theta)(\sin R)^{m-1}\,d\Theta\,dR .
\]
If the support is contained in \(\overline B_g(N;\pi/2)\), then \(R/\tan R\ge0\) and \(1-R/\tan R\ge0\) on the support. Thus \(w^\top H w\ge0\). Moreover, since \(\mu\) is absolutely continuous and is a probability measure, it gives positive mass to the region \(R<\pi/2\). On that region \(R/\tan R>0\), and hence the integral is strictly positive for every \(w\neq0\). Thus \(H\) is positive definite, which rules out both full and directional smeariness.

\emph{Proof of (ii).}
Taking the trace in \eqref{eqn:Hessian-general-density} gives
\[
\operatorname{tr}(H)
=
2\int_0^\pi\!\!\int_{\mathbb S^{m-1}}
b_m(R)\rho(R,\Theta)(\sin R)^{m-1}\,d\Theta\,dR .
\]
By Proposition~\ref{prop:zero_bm}, \(b_m>0\) on \((0,R_m)\) and \(b_m(R_m)=0\). If \(\operatorname{supp}(\mu)\subset \overline B_g(N;R_m)\), then the integrand is nonnegative, and it is positive on a set of positive measure because \(\mu\) is absolutely continuous and not supported only on the geodesic sphere \(R=R_m\). Hence \(\operatorname{tr}(H)>0\), so \(H\neq0\). Therefore full smeariness, which requires the Hessian $H$ to vanish, is impossible.

\emph{Proof of (iii).}
Fix \(m\ge2\) and \(\varepsilon>0\). Set \(a(R):=R/\tan R\) and \(w_m(R):=(\sin R)^{m-1}\). We first choose an even angular density concentrated near the two points \(\pm e_1\). For \(\kappa>0\), put
\[
\varphi_\kappa(\Theta):=\frac{\exp(\kappa\Theta_1^2)}
{\int_{\mathbb S^{m-1}}\exp(\kappa\Theta_1^2)\,d\Theta}.
\]
Then \(\varphi_\kappa\) is continuous, positive, even, and invariant under rotations preserving the \(e_1\)-axis. Its second moment matrix is diagonal:
\[
\Sigma_\kappa
:=
\int_{\mathbb S^{m-1}}\Theta\Theta^\top\varphi_\kappa(\Theta)\,d\Theta
=
\operatorname{diag}(\lambda_\parallel,\lambda_\perp,\ldots,\lambda_\perp),
\]
with \(\lambda_\parallel+(m-1)\lambda_\perp=1\), \(\lambda_\parallel\to1\), and \(\lambda_\perp\to0\) as \(\kappa\to\infty\). This follows from the symmetries of \(\varphi_\kappa\) and its concentration near \(\{\pm e_1\}\).

Choose \(\delta\in(0,\varepsilon)\) so small that \([\pi/2-\delta,\pi/2+\delta]\subset(0,\pi)\). Choose nonnegative smooth radial bumps \(g_+\) and \(g_-\) such that
\[
\operatorname{supp}(g_+)\subset \left(\frac{\pi}{2}-\delta,\frac{\pi}{2}-\frac{\delta}{2}\right),
\qquad
\operatorname{supp}(g_-)\subset \left(\frac{\pi}{2}+\frac{\delta}{2},\frac{\pi}{2}+\delta\right),
\]
and \(\int_0^\pi g_\pm(R)w_m(R)\,dR=1\). For \(p\in[0,1]\), set \(g_p:=pg_++(1-p)g_-\), and define
\[
\rho_p(R,\Theta):=g_p(R)\varphi_\kappa(\Theta).
\]
This defines an absolutely continuous probability measure \(\nu_p\), supported in \(B_g(N;\pi/2+\varepsilon)\), and satisfying Assumption~\ref{assum:density-assumption}. Since \(\varphi_\kappa\) is even, \(\int_{\mathbb S^{m-1}}\Theta\varphi_\kappa(\Theta)\,d\Theta=0\), so \(D\widetilde F_{\nu_p}(0)=0\).

Let
\[
s(p):=\int_0^\pi a(R)g_p(R)w_m(R)\,dR.
\]
Using \eqref{eqn:Hessian-general-density}, the Hessian is
\[
D^2\widetilde F_{\nu_p}(0)=2\{s(p)I_m+(1-s(p))\Sigma_\kappa\}.
\]
Hence its eigenvalues are \(2\mu_\parallel(p)\) in the \(e_1\)-direction and \(2\mu_\perp(p)\) on \(e_1^\perp\), where
\[
\mu_\parallel(p)=s(p)+(1-s(p))\lambda_\parallel,
\qquad
\mu_\perp(p)=\lambda_\perp+s(p)(1-\lambda_\perp).
\]
Since \(a(R)>0\) below \(\pi/2\) and \(a(R)<0\) above \(\pi/2\), we have \(s(1)>0\) and \(s(0)<0\). Taking \(\kappa\) sufficiently large, we may assume \(\lambda_\perp<-s(0)/(1-s(0))\). Then \(\mu_\perp(0)<0\), while \(\mu_\perp(1)>0\). By continuity there exists \(p^\ast\in(0,1)\) such that \(\mu_\perp(p^\ast)=0\). For this \(p^\ast\),
\[
s(p^\ast)=-\frac{\lambda_\perp}{1-\lambda_\perp},
\qquad
\mu_\parallel(p^\ast)
=
\frac{\lambda_\parallel-\lambda_\perp}{1-\lambda_\perp}>0.
\]
Thus \(D^2\widetilde F_{\nu_{p^\ast}}(0)\) is positive semidefinite of rank one and has kernel \(V:=e_1^\perp\).

We now verify the higher-order behavior on \(V\). Since the angular density is even, the cubic term in \eqref{eqn:g-eqn} integrates to zero; equivalently, \(D^3\widetilde F_{\nu_{p^\ast}}(0)=0\). For \(w\in V\), the fourth derivative formula \eqref{eqn:fourth-derivative-general-density} gives
\[
D^4\widetilde F_{\nu_{p^\ast}}(0)[w,w,w,w]
=
24\|w\|^4 I,
\]
where
\[
I
:=
\int_0^\pi
\left(A_0(R)+\lambda_\perp A_2(R)R^2+\beta_\perp A_4(R)R^4\right)
g_{p^\ast}(R)w_m(R)\,dR,
\]
and \(\beta_\perp:=\int_{\mathbb S^{m-1}}\Theta_2^4\varphi_\kappa(\Theta)\,d\Theta\). Here we used that, for \(w\in e_1^\perp\),
\[
\int_{\mathbb S^{m-1}}\langle w,\Theta\rangle^2\varphi_\kappa(\Theta)\,d\Theta=\lambda_\perp\|w\|^2,
\qquad
\int_{\mathbb S^{m-1}}\langle w,\Theta\rangle^4\varphi_\kappa(\Theta)\,d\Theta=\beta_\perp\|w\|^4.
\]

It remains to show \(I>0\). Write \(t=R-\pi/2\). A direct expansion of the coefficients in \eqref{eqn:A-B} gives
\[
A_0\left(\frac{\pi}{2}+t\right)=\frac{\pi}{24}t+\frac13t^2+O(t^3),
\qquad
a\left(\frac{\pi}{2}+t\right)=-\frac{\pi}{2}t-t^2+O(t^3).
\]
Therefore
\[
A_0\left(\frac{\pi}{2}+t\right)+\frac1{12}a\left(\frac{\pi}{2}+t\right)
=
\frac14t^2+O(t^3).
\]
After shrinking \(\delta\) if necessary, this gives
\[
A_0(R)\ge -\frac1{12}a(R)+\frac18\left(R-\frac{\pi}{2}\right)^2
\quad\text{on }\left[\frac{\pi}{2}-\delta,\frac{\pi}{2}+\delta\right].
\]
Also \(A_2(R)R^2\) and \(A_4(R)R^4\) are bounded on this interval; say
\[
|A_2(R)R^2|+|A_4(R)R^4|\le M .
\]
Since \(\mu_\perp(p^\ast)=0\), we have
\[
\int_0^\pi a(R)g_{p^\ast}(R)w_m(R)\,dR
=
-\frac{\lambda_\perp}{1-\lambda_\perp}.
\]
Using the support choice of \(g_+\) and \(g_-\), we also have
\[
\int_0^\pi\left(R-\frac{\pi}{2}\right)^2g_{p^\ast}(R)w_m(R)\,dR
\ge \frac{\delta^2}{4}.
\]
Consequently,
\[
I
\ge
\frac{\delta^2}{32}-M\lambda_\perp-M\beta_\perp.
\]
Finally, \(0\le\beta_\perp\le\lambda_\perp\), and \(\lambda_\perp\to0\) as \(\kappa\to\infty\). Taking \(\kappa\) still larger if necessary, we obtain \(I>0\). Hence
\[
D^4\widetilde F_{\nu_{p^\ast}}(0)[w,w,w,w]>0
\qquad\text{for every }w\in V\setminus\{0\}.
\]

Thus \(\widetilde F_{\nu_{p^\ast}}\) has zero first derivative, positive semidefinite rank-one Hessian, vanishing third derivative, and strictly positive fourth derivative along the Hessian kernel \(V=e_1^\perp\). It follows from the Taylor expansion that \(N\) is a local Fréchet mean and is directionally \(2\)-smeary along \(V\). Setting \(\nu:=\nu_{p^\ast}\) completes the proof.

\emph{Proof of (iv): the \(S_m\)-obstruction to full smeariness.}
Assume \(m\ge4\) and
\[
\operatorname{supp}(\mu)\subset \overline B_g(N;S_m).
\]
If \(D^2\widetilde F_\mu(0)\neq0\), then full smeariness is already impossible. Hence suppose that \(D^2\widetilde F_\mu(0)=0\). By the averaged fourth derivative identity \eqref{eqn:averaged-fourth-derivative-general-density},
\[
\int_{\mathbb S^{m-1}}
D^4\widetilde F_\mu(0)[w,w,w,w]\,d\sigma(w)
=
24\int_0^\pi\int_{\mathbb S^{m-1}}
h_m(R)\rho(R,\Theta)(\sin R)^{m-1}\,d\Theta\,dR,
\]
where \(d\sigma\) denotes normalized surface measure on the unit sphere in \(T_N\mathbb S^m\). By Lemma~\ref{lem:Rm-Sm-ordering}, \(h_m(R)<0\) for \(0<R<S_m\), while \(h_m(S_m)=0\). Since \(\mu\) is absolutely continuous, it gives no mass to the geodesic spheres \(R=0\) and \(R=S_m\). Therefore the right-hand side is strictly negative. Hence there exists \(w\in\mathbb S^{m-1}\) such that
\[
D^4\widetilde F_\mu(0)[w,w,w,w]<0.
\]
If \(0\) were a local minimum of \(\widetilde F_\mu\), then for \(\varphi(t):=\widetilde F_\mu(tw)\) we would have \(\varphi'(0)=0\) and \(\varphi''(0)=0\). If \(\varphi^{(3)}(0)\neq0\), then one side of \(0\) gives \(\varphi(t)<\varphi(0)\) for small \(t\). If \(\varphi^{(3)}(0)=0\), the negative fourth derivative gives the same conclusion. Thus \(0\) cannot be a local minimum. Hence full smeariness at \(N\) is impossible.

\emph{Proof of (v): sharpness beyond \(S_m\).}
This follows directly from the rotationally symmetric construction in Theorem~\ref{thm:semi-tight-support-rot-symm}(iii). Indeed, rotationally symmetric densities are a special case of general densities. Applying that theorem with center \(N\), for every \(m\ge4\) and every \(\varepsilon>0\) with \(S_m+\varepsilon<\pi\), we obtain an absolutely continuous probability measure \(\nu\), satisfying Assumption~\ref{assum:density-assumption}, such that
\[
\operatorname{supp}(\nu)\subset B_g(N;S_m+\varepsilon),
\]
and \(N\) is a fully \(2\)-smeary local Fréchet mean of \(\nu\). Moreover, the construction in Theorem~\ref{thm:semi-tight-support-rot-symm}(iii) is rotationally symmetric about \(N\), as claimed.

\end{proof}

\section{Classical CLT consequences below the support threshold}
\label{scn:bpclt-consequences}

We record the corresponding classical central limit theorem in the support regimes where the results above force the Hessian of the Fr\'echet function to be positive definite. This is an application of the Bhattacharya--Patrangenaru CLT, not a new CLT. The point is that the support conditions identify regimes in which Hessian-degenerate, and hence potentially smeary, behavior is ruled out.

Let \(X\sim\mu\) be an \(\mathbb S^m\)-valued random variable with unique population Fr\'echet mean \(N\), and let \(\hat\mu_n\) be a measurable selection of sample Fr\'echet means. Set
\[
    U_n:=\log_N(\hat\mu_n),
    \qquad
    \Sigma:=\operatorname{Cov}(\log_N X),
    \qquad
    H:=D^2\widetilde F_\mu(0).
\]
Assume the usual regularity and consistency hypotheses of the Bhattacharya--Patrangenaru CLT; see \citet{BP,BP2}. In the present setting, Assumption~\ref{assum:density-assumption} gives the required smoothness of the lifted Fr\'echet function near \(0\in T_N\mathbb S^m\).

\begin{corollary}[\textbf{BPCLT below the support thresholds}]
\label{cor:bpclt-subthreshold}
The following hold.

\begin{enumerate}
\item[\textup{(i)}]
Assume that \(\mu\) has a rotationally symmetric density \(\rho(R)\) about \(N\), satisfies Assumption~\ref{assum:density-assumption}, and
\[
    \operatorname{supp}(\mu)\subset \overline B_g(N;R_*)
    \qquad\text{for some }R_*\le R_m .
\]
Then \(H=\lambda_m I_m\), where
\[
    \lambda_m
    :=
    \frac{2\operatorname{vol}(\mathbb S^{m-1})}{m}
    \int_0^{R_*}
    b_m(R)\rho(R)(\sin R)^{m-1}\,dR
    >0.
\]
Moreover,
\[
    \Sigma=\sigma_m^2 I_m,
    \qquad
    \sigma_m^2
    :=
    \frac{\operatorname{vol}(\mathbb S^{m-1})}{m}
    \int_0^{R_*}
    R^2\rho(R)(\sin R)^{m-1}\,dR .
\]
Consequently,
\[
    \sqrt n\,U_n
    \xrightarrow{\mathcal D}
    \mathcal N\!\left(0,\frac{4\sigma_m^2}{\lambda_m^2}I_m\right).
\]
If \(\sigma_m^2>0\), then
\[
    \frac{n\lambda_m^2}{4\sigma_m^2}\|U_n\|^2
    \xrightarrow{\mathcal D}
    \chi_m^2 .
\]

\item[\textup{(ii)}]
Assume that \(\mu\) is absolutely continuous, satisfies Assumption~\ref{assum:density-assumption}, is not necessarily rotationally symmetric, and
\[
    \operatorname{supp}(\mu)\subset \overline B_g(N;\pi/2).
\]
Then Theorem~\ref{thm:general-density-support-threshold} gives that \(H\) is positive definite. Hence
\[
    \sqrt n\,U_n
    \xrightarrow{\mathcal D}
    \mathcal N\!\left(0,4H^{-1}\Sigma H^{-1}\right),
\]
where \(H\) is the explicit matrix in \eqref{eqn:Hessian-general-density}, equivalently the bilinear form in \eqref{eqn:Hessian-general-bilinear-form}. If \(\Sigma\) is positive definite, then
\[
    \frac n4\,U_n^\top H\Sigma^{-1}HU_n
    \xrightarrow{\mathcal D}
    \chi_m^2 .
\]
\end{enumerate}
\end{corollary}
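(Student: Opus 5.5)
The corollary is an application of the Bhattacharya--Patrangenaru CLT. The plan is therefore to (a) verify its nondegeneracy hypothesis, positive definiteness of \(H\), using the support results already proved, and (b) compute \(H\) and \(\Sigma\) explicitly in the rotationally symmetric case. Under Assumption~\ref{assum:density-assumption}, the lifted Fr\'echet function is smooth near \(0\), differentiation under the integral sign is justified, and \(\log_N X\) is bounded \(\mu\)-a.s. (since \(R<\pi-\varepsilon_0\)). Hence \(\Sigma\) is finite and \(\mathbb E[\log_N X]=0\) by the first-order condition at the Fr\'echet mean \(N\). The BPCLT then gives \(\sqrt n\,U_n\to\mathcal N(0,H^{-1}\operatorname{Cov}(\nabla g)H^{-1})\). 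Because the gradient of \(g(\cdot;v)\) at \(u=0\) is \(-2v\) by \eqref{eqn:g-eqn}, this covariance equals \(4H^{-1}\Sigma H^{-1}\).

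For part (i), I would invoke \eqref{eqn:Hess}. With support in \(\overline B_g(N;R_*)\) and \(R_*\le R_m\), the integral reduces to \(\int_0^{R_*}\). Proposition~\ref{prop:zero_bm} gives \(b_m>0\) on \((0,R_m)\). Absolute continuity rules out mass on the sphere \(\{R=R_m\}\), exactly as in Theorem~\ref{thm:semi-tight-support-rot-symm}(ii)(a), so \(\lambda_m>0\) and \(H=\lambda_mI_m\). For \(\Sigma\), I write \(\log_NX=R\Theta\), so \(\Sigma=\int R^2\Theta\Theta^{\mathsf T}\rho(R)(\sin R)^{m-1}\,d\Theta\,dR\). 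The mean vanishes by the symmetry \(\Theta\mapsto-\Theta\). The Funk--Hecke moment \(\int\Theta\Theta^{\mathsf T}d\Theta=\frac{\operatorname{vol}(\mathbb S^{m-1})}{m}I_m\), already used in Proposition~\ref{prop:Hessian-common}, then gives \(\Sigma=\sigma_m^2I_m\). Substituting, the limiting covariance is \(4\lambda_m^{-2}\sigma_m^2I_m\). The \(\chi^2_m\) statement follows from the continuous mapping theorem applied to \(\|\cdot\|^2\) of the standardized vector \(\frac{\lambda_m}{2\sigma_m}\sqrt n\,U_n\to\mathcal N(0,I_m)\).

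For part (ii), positive definiteness of \(H\) is exactly Theorem~\ref{thm:general-density-support-threshold}(i), and the Gaussian limit is the BPCLT with covariance \(4H^{-1}\Sigma H^{-1}\). If \(\Sigma\succ0\), set \(Z_n:=\frac{\sqrt n}{2}\Sigma^{-1/2}HU_n\). Then \(Z_n\to\mathcal N(0,\Sigma^{-1/2}H\cdot H^{-1}\Sigma H^{-1}\cdot H\Sigma^{-1/2})=\mathcal N(0,I_m)\). Since \(\|Z_n\|^2=\frac n4U_n^{\mathsf T}H\Sigma^{-1}HU_n\) and \(H\) is symmetric, the continuous mapping theorem yields \(\chi^2_m\).

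The main obstacle is not computational but lies in matching the hypotheses of the BPCLT. We need consistency of \(\hat\mu_n\) (assumed among the ``usual hypotheses''), and \(N\) must be a genuine unique minimizer rather than merely a critical point. Both are granted by the standing assumption that \(N\) is the unique population Fr\'echet mean. Beyond that, the only point needing care is identifying the gradient of the squared distance as \(-2\log_N X\), which I take from the linear term of \eqref{eqn:g-eqn}.
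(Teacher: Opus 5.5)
Your proposal is correct and follows the paper's proof essentially step by step. You obtain \(H=\lambda_mI_m\) from \eqref{eqn:Hess}, with \(\lambda_m>0\) from Proposition~\ref{prop:zero_bm} plus absolute continuity, and \(\Sigma=\sigma_m^2I_m\) from the isotropic second-moment identity. You then apply the Bhattacharya--Patrangenaru CLT with covariance \(4H^{-1}\Sigma H^{-1}\) and get the \(\chi^2_m\) limits by standardizing. You also spell out two points the paper leaves implicit: that \(\mathbb E[\log_N X]=0\), so \(\Sigma\) is the second-moment matrix, and that the factor \(4\) comes from \(\nabla_u g(0;v)=-2v\).
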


\begin{proof}
We first prove \textup{(i)}. By Theorem~\ref{thm:semi-tight-support-rot-symm}, or directly from \eqref{eqn:Hess}, the support condition \(\operatorname{supp}(\mu)\subset\overline B_g(N;R_*)\) with \(R_*\le R_m\) gives
\[
    H
    =
    \frac{2\operatorname{vol}(\mathbb S^{m-1})}{m}
    \left(
    \int_0^{R_*}
    b_m(R)\rho(R)(\sin R)^{m-1}\,dR
    \right)I_m
    =
    \lambda_m I_m,
\]
with \(\lambda_m>0\). Since \(\log_N X=R\Theta\) and the density is rotationally symmetric,
\[
    \Sigma
    =
    \int_0^{R_*}\int_{\mathbb S^{m-1}}
    R^2\Theta\Theta^\top
    \rho(R)(\sin R)^{m-1}\,d\Theta\,dR .
\]
Using
\[
    \int_{\mathbb S^{m-1}}\Theta\Theta^\top\,d\Theta
    =
    \frac{\operatorname{vol}(\mathbb S^{m-1})}{m}I_m,
\]
we get \(\Sigma=\sigma_m^2I_m\), with \(\sigma_m^2\) as above.

The Bhattacharya--Patrangenaru CLT gives
\[
    \sqrt n\,U_n
    \xrightarrow{\mathcal D}
    \mathcal N(0,4H^{-1}\Sigma H^{-1}).
\]
Substituting \(H=\lambda_m I_m\) and \(\Sigma=\sigma_m^2I_m\) yields
\[
    4H^{-1}\Sigma H^{-1}
    =
    \frac{4\sigma_m^2}{\lambda_m^2}I_m.
\]
This proves the asserted normal limit. The displayed \(\chi_m^2\)-limit follows by applying the inverse of the limiting covariance matrix.

For \textup{(ii)}, Theorem~\ref{thm:general-density-support-threshold} gives that \(H=D^2\widetilde F_\mu(0)\) is positive definite under the closed hemispherical support condition. Hence \(H^{-1}\) exists, and the Bhattacharya--Patrangenaru CLT gives
\[
    \sqrt n\,U_n
    \xrightarrow{\mathcal D}
    \mathcal N(0,4H^{-1}\Sigma H^{-1}).
\]
If \(\Sigma\) is positive definite, then the limiting covariance matrix is positive definite and
\[
    \left(4H^{-1}\Sigma H^{-1}\right)^{-1}
    =
    \frac14 H\Sigma^{-1}H.
\]
Therefore
\[
    n\,U_n^\top
    \left(4H^{-1}\Sigma H^{-1}\right)^{-1}
    U_n
    =
    \frac n4\,U_n^\top H\Sigma^{-1}HU_n
    \xrightarrow{\mathcal D}
    \chi_m^2 .
\]
This completes the proof.
\end{proof}


\section{Asymptotics of the threshold radii $R_m, S_m$}
\label{app:asymptotics-Rm-Sm}

The support thresholds in the main text are fixed-dimensional. For comparison with high-dimensional constructions, we record the order of their distance from the hemisphere.

\begin{proposition}[\textbf{High-dimensional location of \(R_m\) and \(S_m\)}]
\label{prop:asymptotics-Rm-Sm}
\label{prop:asymp-first-zero-h_m}
As \(m\to\infty\),
\[
R_m=\frac{\pi}{2}+O\!\left(\frac{1}{m}\right),
\qquad
S_m=\frac{\pi}{2}+O\!\left(\frac{1}{m}\right).
\]
In particular, \(R_m\to\pi/2\) and \(S_m\to\pi/2\).
\end{proposition}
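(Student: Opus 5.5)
The plan is to prove both estimates by elementary bracketing of the zeros of the explicit functions \(b_m\) and \(h_m\) near \(\pi/2\), writing \(R=\pi/2+t\) and using \(\cot(\pi/2+t)=-\tan t\).

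\emph{The radius \(R_m\).} The zero condition \(b_m(R_m)=0\) reads \((\pi/2+t)\tan t=1/(m-1)\). Since \(b_m\) is strictly decreasing (Proposition~\ref{prop:zero_bm}), it suffices to find one point with \(b_m\le0\). Take \(t=c/m\) with \(c\) fixed. Then \(\tan t\ge t\) gives \((\pi/2+t)\tan t\ge (\pi/2)c/m\), which exceeds \(1/(m-1)\) once \(c>4/\pi\) and \(m\) is large. Hence \(b_m(\pi/2+c/m)<0\), so \(\pi/2<R_m<\pi/2+c/m\). This also yields the sharper expansion \(R_m=\pi/2+2/(\pi m)+O(m^{-2})\), although that is not needed.

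\emph{The radius \(S_m\).} I would work with the bracket in \eqref{eqn:h_m_simplified}, i.e. with \(Q_m\) from the proof of Lemma~\ref{lem:positivity-h_m-past-first-zero}, where \(x=-\cot R=\tan t\). For \(R\in(\pi/2,\pi)\),
\[
Q_m(R)=3(m-3)Rx^3+(m-7)Rx+3(m-3)x^2-4 .
\]
Since \(h_m<0\) on \((0,S_m)\) by Lemma~\ref{lem:Rm-Sm-ordering} and \(S_m\) is the first zero, it suffices to exhibit a point \(R^\ast=\pi/2+C/m\) with \(h_m(R^\ast)>0\), i.e. \(Q_m(R^\ast)>0\). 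The intermediate value theorem then gives a zero in \((\pi/2,R^\ast)\), so \(S_m<R^\ast\).

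At \(t=C/m\) we have \(x\ge C/m\) and \(R\ge\pi/2\). For \(m\ge8\) every term except \(-4\) is nonnegative, so
\[
Q_m(R^\ast)\ \ge\ (m-7)\frac{\pi}{2}\frac{C}{m}-4 .
\]
For large \(m\) the right-hand side tends to \(\pi C/2-4\), which is positive once \(C>8/\pi\), for instance \(C=3\). Thus \(\pi/2<S_m<\pi/2+3/m\) for all sufficiently large \(m\), which gives \(S_m=\pi/2+O(1/m)\).

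The only potential obstacle is sign bookkeeping. One must keep \(x>0\) on \((\pi/2,\pi)\), the positive prefactor \((m-1)/(12m(m+2))\), and \(m-7\ge0\) so that the dropped terms are nonnegative. One must also check that \(R^\ast<\pi\), which holds for large \(m\). All of these are immediate. The lower bounds \(R_m,S_m>\pi/2\) come from Proposition~\ref{prop:zero_bm} and Proposition~\ref{prop:sign_h_m_2_3_and_zero_m_ge_4}.
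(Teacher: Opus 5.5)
Your proof is correct. For \(R_m\) it is the paper's argument in bracketing form. Both rest on \(\tan t\ge t\) in the zero equation \((\pi/2+t)\tan t=1/(m-1)\): the paper bounds \(\delta_m\) directly, while you test \(b_m\) at \(\pi/2+c/m\) and use monotonicity. Your side remark that this ``also yields'' \(R_m=\pi/2+2/(\pi m)+O(m^{-2})\) overstates slightly. The bracket gives only the upper bound, and the matching lower bound needs \(\tan t=t+O(t^3)\). This is harmless, since the remark is not needed.

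For \(S_m\) your route genuinely differs from the paper's. The paper does not use the closed form \eqref{eqn:h_m_simplified} at this point. It argues perturbatively from the definition \eqref{eqn:h}, using three facts:
\begin{itemize}
\item \(h_m=A_0+O(1/m)\) uniformly near \(\pi/2\), because \(A_2R^2\) and \(A_4R^4\) are bounded there;
\item \(A_0(\pi/2)=0\) with \(A_0'(\pi/2)=\pi/24>0\);
\item \(A_2(\pi/2)(\pi/2)^2=-1/3\), which gives \(h_m(\pi/2)<0\) for large \(m\).
\end{itemize}
It then finds a sign change within \(\Delta/m\) of \(\pi/2\), for an unspecified constant \(\Delta\).

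You instead pass to \(Q_m\) and drop every nonnegative term to get \(Q_m\ge(m-7)Rx-4\) for \(m\ge8\). This yields the explicit bound \(S_m<\pi/2+3/m\), valid for instance for \(m\ge47\). You also get \(S_m<R^\ast\) directly from Lemma~\ref{lem:Rm-Sm-ordering}, since \(h_m(R^\ast)>0\) rules out \(R^\ast\le S_m\). So you need neither the intermediate value theorem nor a separate sign computation at \(\pi/2\).

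The trade-off is as follows. Your argument is shorter and quantitative, but it rests entirely on the symbolic simplification \eqref{eqn:h_m_simplified}. That formula is consistent with the coefficients in \eqref{eqn:A-B} and is used throughout the paper, so relying on it is legitimate. The paper's argument needs only the first-order behaviour of \(A_0\) at the equator and boundedness of the lower-order coefficients. It also shows why the threshold collapses onto the hemisphere: the \(m\to\infty\) limit of \(h_m\) is \(A_0\), which vanishes at \(\pi/2\) with positive slope.
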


\begin{proof}
\textbf{Analysis of \(R_m\).} By Proposition~\ref{prop:zero_bm}, \(R_m\) is the unique zero of \(b_m(R)=1+(m-1)R\cot R\), and \(R_m\in(\pi/2,\pi)\). Write
\[
R_m=\frac{\pi}{2}+\delta_m,
\qquad
n:=m-1,
\]
so that \(\delta_m\in(0,\pi/2)\). Since \(\cot(\pi/2+\delta)=-\tan\delta\), the equation \(b_m(R_m)=0\) becomes
\[
\left(\frac{\pi}{2}+\delta_m\right)\tan\delta_m=\frac{1}{n}.
\]
As \(0<\delta_m<\pi/2\), we have \(\delta_m\le\tan\delta_m\). Hence
\[
\frac{\pi}{2}\delta_m
\le
\left(\frac{\pi}{2}+\delta_m\right)\tan\delta_m
=
\frac{1}{n}.
\]
Therefore \(0<\delta_m\le 2/(\pi n)\), and hence
\[
R_m=\frac{\pi}{2}+O\!\left(\frac{1}{m}\right).
\]

\textbf{Analysis of \(S_m\).}  Set \(S_0:=\pi/2\). From \eqref{eqn:h},
\[
h_m(R)=A_0(R)+\frac{1}{m}A_2(R)R^2+\frac{3}{m(m+2)}A_4(R)R^4.
\]
Direct evaluation of the coefficients in \eqref{eqn:A-B} at \(S_0\) gives
\[
A_0(S_0)=0,
\qquad
A_0'(S_0)=\frac{\pi}{24}>0,
\qquad
A_2(S_0)S_0^2=-\frac{1}{3}.
\]
Also \(A_4(S_0)S_0^4\) is finite. Therefore
\[
h_m(S_0)
=
-\frac{1}{3m}
+
O\!\left(\frac{1}{m^2}\right)
<0
\]
for all sufficiently large \(m\).

Since \(A_0'(S_0)=\pi/24\), there exists \(r>0\) such that
\[
A_0(S_0+\delta)\ge \frac{\pi}{48}\delta
\qquad
\text{for all }0\le\delta\le r.
\]
Moreover, \(A_2(R)R^2\) and \(A_4(R)R^4\) are bounded on \([S_0,S_0+r]\). Hence there exists \(C>0\) such that, for all \(0\le\delta\le r\) and all \(m\ge4\),
\[
h_m(S_0+\delta)
\ge
\frac{\pi}{48}\delta-\frac{C}{m}.
\]
Choose \(\Delta>48C/\pi\). For all sufficiently large \(m\), we have \(\Delta/m\le r\), and therefore
\[
h_m\!\left(S_0+\frac{\Delta}{m}\right)
\ge
\frac{1}{m}\left(\frac{\pi\Delta}{48}-C\right)>0.
\]
Since \(h_m(S_0)<0\) and \(h_m\) is continuous, \(h_m\) has a zero in
\[
\left(S_0,S_0+\frac{\Delta}{m}\right).
\]
By definition, \(S_m\) is the first zero of \(h_m\) in \((\pi/2,\pi)\). Consequently,
\[
0<S_m-S_0\le \frac{\Delta}{m}
\]
for all sufficiently large \(m\). Thus
\[
S_m=\frac{\pi}{2}+O\!\left(\frac{1}{m}\right),
\]
and the proof is complete.
\end{proof}

Figure~\ref{fig:bm_hm} below shows the functions $b_m$ and $h_m$ for $m\in\{2,3,5,10,25,50\}$.

\begin{figure}[htbp]
\centering

\includegraphics[width=0.48\textwidth]{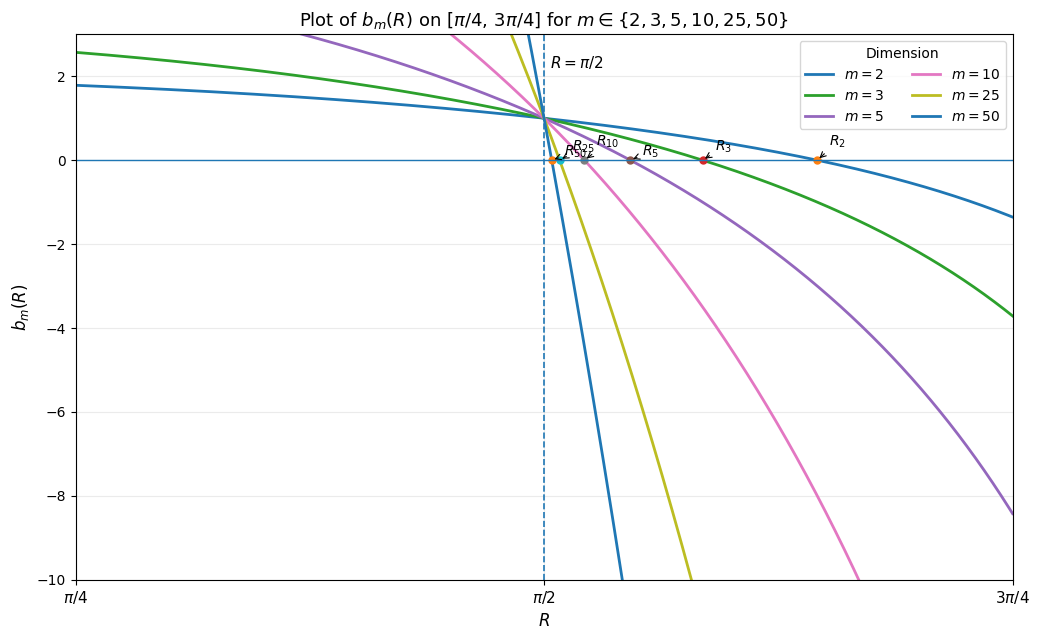}
\hfill
\includegraphics[width=0.48\textwidth]{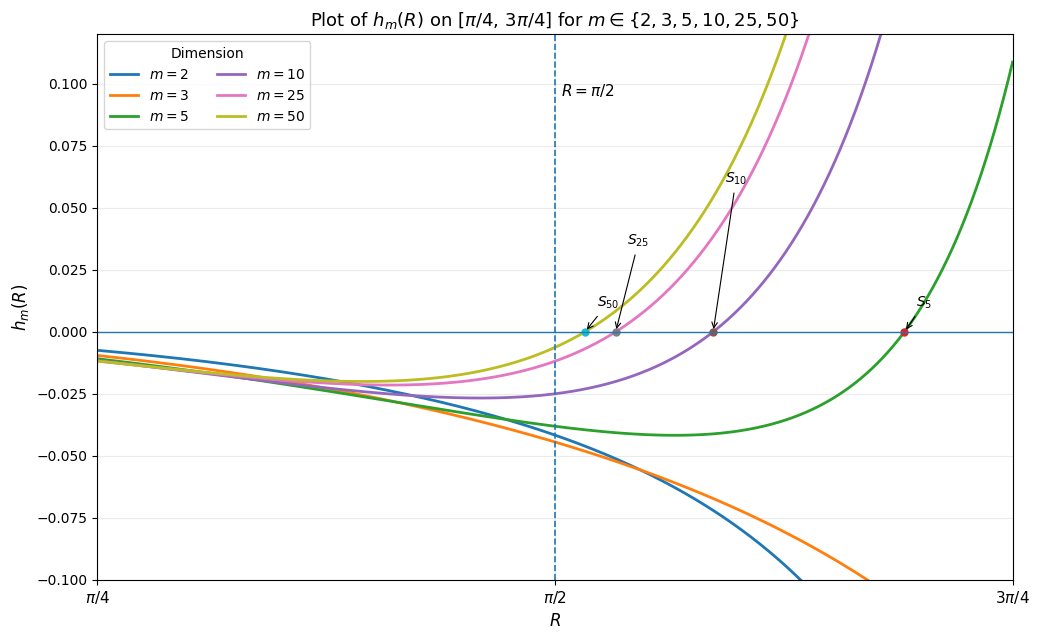}

\caption{
Plots of $b_m(R)$ (left) and $h_m(R)$ (right) as functions of $R$ on the interval $[\pi/4,\,3\pi/4]$ for $m \in \{2,3,5,10,25,50\}$. 
The dashed vertical line indicates $R=\pi/2$.}
\label{fig:bm_hm}
\end{figure}

\appendix
\section{Symbolic Computation Code}\label{scn:symbolic-computation}

The following Python (SymPy) code was used to compute the Taylor coefficients:

\begin{verbatim}
import sympy as sp

# Symbols
t, a, R, alpha = sp.symbols('t a R alpha', real=True)
r = t*a

X = (sp.cos(r)*sp.cos(R)
     + (sp.sin(r)/r)*(sp.sin(R)/R)*(t*alpha))

f = sp.acos(X)**2

series_f = sp.series(f, t, 0, 5)
series_noO = sp.expand(series_f.removeO())

# Extract coefficients
coeffs = {}
for k in range(5):
    ck = sp.simplify(series_noO.coeff(t, k))
    coeffs[k] = ck
    print(f"C{k} =", ck)
\end{verbatim}


\bibliographystyle{plainnat}   
\bibliography{references}      

\end{document}